\documentclass[10pt,oneside]{article}
%\linespread{1.5}%

%%%%%%%%%%%%%%%%%%%%%%%%%%%%%%%%%%%Package%%%%%%%%%%%%%%%%%%%%%%%%%%%%%%%%%%%%%%
%\usepackage[english]{babel}
%\usepackage[all]{xy}
\usepackage{amscd,amssymb}
\usepackage{dsfont,amsthm,amssymb,amsmath,amsfonts}%,mathrsfs,mathtools}
\usepackage{graphicx}
\graphicspath{ {images/} }
\usepackage{minitoc}
\usepackage{authblk}
\usepackage{tikz}
\usepackage{mathptmx}
\usepackage{float}
\usepackage{ragged2e}
\usepackage{lipsum}
\usepackage[utf8]{inputenc}
\usepackage[english]{babel}
%\usepackage{titling}
%\usepackage[backend=bibtex]{biblatex}
%\setlength{\parskip}{0.5em}
%\renewcommand{\baselinestretch}{1.5}
%%%%%%%%%%%%%%%%%%%%%%%%%%%%%%%%%%Length%%%%%%%%%%%%%%%%%%%%%%%%%%%%%%%%%%%%%%%%

%\setlength{\baselineskip}{13pt}
%\setlength{\textheight}{7.8in}
%\setlength{\textwidth}{5in}
%\AtBeginDocument{\fontsize{10pt}{13pt}\selectfont}
%\AtBeginBibliography{\small}
%\AtNextBibliography{\small}
%%%%%%%%%%%%%%%%%%%%%%%%%%%%%Counter%%%%%%%%%%%%%%%%%%%%%%%%%%%%%%%%%%%%%%%%%%%%

\setcounter{section}{0}

%%%%%%%%%%%%%%%%%%%%%%%%%%%%%%%%Theorem%%%%%%%%%%%%%%%%%%%%%%%%%%%%%%%%%%%%%%%%%
%\renewcommand\bibliographytypesize{\small}
%\renewcommand{\bibfont}{\small}
%\def\bibfont{\small}
\newtheorem{theorem}{Theorem}[section]
\newtheorem{lemma}[theorem]{Lemma}
\newtheorem{corollary}[theorem]{Corollary}

\newtheorem{remark}[theorem]{Remark}
\newtheorem{example}[theorem]{Example}
\newtheorem{definition}[theorem]{Definition}

%%%%%%%%%%%%%%%%%%%%%%%%%%%%%%%%%%%%%%%%%%%%%%%%%%%%%%%%%%%%%%%%%%%%%%%%%%%%%%%%
\newcommand{\gr}{\textrm{\hbox{\textup{gr}}}}
\newcommand{\dm}{\textrm{\hbox{\textup{diam}}}}
\newcommand{\nl}{\textrm{\hbox{\textup{Nil}}}}
\newcommand{\ann}{\hbox{\textrm{\hbox{\textup{ann}}}}}
\providecommand{\kw}[1]{\par
	\vspace*{8pt}
	{\footnotesize{\leftskip18pt\rightskip\leftskip
	\noindent #1\par}}\par}
\providecommand{\ccode}[1]{\par
	\vspace*{8pt}
	{\footnotesize{\leftskip18pt\rightskip\leftskip
	\noindent #1\par}}\par}
\providecommand{\abstrct}[1]{
	\vspace*{10pt}
	{\footnotesize{\leftskip18pt\rightskip\leftskip
	\noindent #1\par}}\par}
%%%%%%%%%%%%%%%%%%%%%%%%%%%%%%%%%%%%%%%%%%%%%%%%%%%%%%%%%%%%%%%%%%%%%%%%%%%%%%%%%%%%

\date{ }

\begin{document}
%\setlength{\baselineskip}{13pt}

%\fontsize{8pt}{13pt}\selectfont
%\bibliographystyle{plain}

\title{{\textbf{Compressed Intersection Annihilator Graph}}}
\author{{\textbf{Mayssa Soliman$^1$,} \textbf{Nefertiti Megahed$^2$}}}
\affil{\textit{$^1$ Department of Mathematics, Faculty of Science, Cairo University,}\\ \textit{Giza, Egypt} \\ \textit{mayssaabdelhamed@gstd.sci.cu.edu.eg}}
\affil{\textit{$^2$ Department of Mathematics, Faculty of Science, Cairo University,}\\ \textit{Giza, Egypt} \\ \textit{nefertiti@sci.cu.edu.eg}}
\maketitle
%%%%%%%%%%%%%%%%%%%%%%%%%%%%%%%%%%%%%%%%%%%%% Section 0: Abstract     %%%%%%%%%%%%%%%%%%%%%%%%%%%%%%%%%%%%%%%%%%%%%%%%%%%%%%%%%%%
%\baselineskip10pt
%\begin{footnotesize}
\abstrct{
Let $R$ be a commutative ring with a non-zero identity. In this paper, we define a new graph, the compressed intersection annihilator graph, denoted by $IA(R)$, and investigate some of its theoretical properties and its relation with the structure of the ring. It is a generalization of the torsion graph $\Gamma_{R}(R)$. We study classes of rings for which the equivalence between the set of zero-divisors of $R$ being an ideal and the completeness of $IA(R)$ holds. We also study the relation between $\Gamma_{R}(R)$ and $IA(R)$. In addition, we show that if the compressed intersection annihilator graph of a ring $R$ is finite, then there exist a subring $S$ of $R$ such that $IA(S)\cong IA(R)$. Also, we show that the compressed intersection annihilator graph will never be a complete bipartite graph. Besides, we show that the graph $IA(R)$ with at least three vertices is connected and its diameter is less than or equal to three. Finally, we determine the properties of the graph in the cases when $R$ is the ring of integers modulo $n$, the direct product of integral domains, the direct product of Artinine local rings and the direct product of two rings such that one of them is not an integral domain.}

\kw{\textit{Keywords}: Annihilator; annihilator graph; torsion graph; compressed annihilator graph.}
\ccode{2010 Mathematics Subject Classification: 05C25, 05C99, 13M99, 13A99, 13A15}

%\end{footnotesize}
%%%%%%%%%%%%%%%%%%%%%%%%%%%%%%%%%%%%%%%%%%%%%%%%%%%%%%%%%%%%%%%%%% Section 1: Introduction %%%%%%%%%%%%%%%%%%%%%%%%%%%%%%%%%%%%%%%%%%%%%%%%%%%%%%%%%%%

\begin{Section}{Introduction}
\noindent
The study of zero-divisors plays an important role in the ring theory, for example to find solutions to equations, however the set of zero-divisors lacks algebraic structure. The set of zero-divisors of a ring $R$, denoted by $Z(R)$, is always closed under multiplication but it is not always closed under addition. In addition the multiplication operation is not transitive. In order to try to understand the relation between $Z(R)$ elements through graph theory a new approach was developed recently. We assume throughout this paper that $R$ is a commutative ring with non-zero identity.

The interplay between ring theory and graph theory started from the work of I.~Beck in 1988, \cite{Beck}, when he defined the zero-divisor graph $\Gamma(R)$ as the undirected simple graph with vertices represented by all elements of $R$ and two distinct vertices are adjacent if their product is zero. In 1999, D.~F.~Anderson and P.~S.~Livingston in \cite{LivingstonI} made a modification to Beck's graph by considering the set of vertices to be the set of non-zero zero-divisors of the ring $R$ and the adjacency kept as before. They were interested in determining some important properties of the graph and their relation to $R$. Inspired by ideas of S.~B.~Mulay in \cite{Mulay}, S.~Spiroff and C.~Wickham in 2011, \cite{compressed}, modified again the set of vertices by introducing the concept of the compressed zero-divisor graph denoted by $\Gamma_{E}(R)$. The set of vertices was constructed from equivalence classes of zero-divisors determined by the following equivalence relation $\sim$ on $R$: $x\sim y$ if and only if $\ann_{R}(x)=\ann_{R}(y)$, for any $x,\,y \in R$, where $\ann_{R}(x)=\{r\in R\,\mid\, rx=0\}$. E.~Lewis in \cite{Elizebth} proved that $\sim$ is a multiplicative congruence relation on $R$. By the definition of the relation, we have ${[0]_{\sim}=\{x\in R\mid \ann_{R}(x)=R\}=\{0\}}$, $[1]_{\sim}=\{{x\in R}\mid \ann_{R}(x)=\{0\}\}=R\setminus Z(R)$ and thus we have $[x]_{\sim}\subseteq {Z^{*}(R)=Z(R)\setminus\{0\}}$ for each $x\in R\setminus ([0]_{\sim}\cup[1]_{\sim})$. The union here means union of sets not of classes. She also, proved that the set ${R/{\sim}}=\{[x]_{\sim}\mid x\in R\}$ of all congruence classes with the well-defined multiplication, given by $[x]_{\sim}[y]_{\sim}=[xy]_{\sim}$ for all $x, y\in R$, is a commutative monoid with identity $[1]_{\sim}$ and zero $[0]_{\sim}$. S.~Spiroff and C.~Wickham defined $\Gamma_{E}(R)$ as the undirected simple graph with the set of vertices $Z(R/\sim)^{*}=\{[x]_{\sim}\,\mid\,x\in R\setminus ([0]_{\sim}\cup[1]_{\sim})\}$ and two distinct vertices $[x]_{\sim}$ and $[y]_{\sim}$ are adjacent if and only if $xy=0$. There are many different ways of generalizations of the zero-divisor graph, see for example \cite{Non-comm, ZeroModule, ideal-based}.
\par
This is not the only way to associate a graph to a ring and vise versa. One of those important ways introduced by D.~F.~Anderson and A.~Badawi in 2008, \cite{Total}, is the total graph of a commutative ring denoted by $T(\Gamma(R))$. It is defined as the undirected simple graph with $R$ as the set of vertices and two distinct vertices are adjacent if and only if their sum is a zero-divisor. They characterized the properties of the graph when $Z(R)$ is either an ideal or not. In 2013, D.~F.~Anderson and A.~Badawi in \cite{GTotalGph} generalized the total graph over a commutative ring $R$ with respect to a multiplicative prime subset $H$\footnote{{A non-empty proper subset $H$ of $R$ is said to be a \textit{multiplicative-prime subset} of $R$ if it satisfies the following two conditions:} {\begin{enumerate}
                  \item {$ab\in H$ for every $a\in H$ and $b\in R$},\,
                  \item {if $ab\in H$ for $a,b\in R$, then either $a\in H$ or $b\in H$.}
                \end{enumerate}}
As an example of a multiplicative-prime subset, we take $H=Z(R)$.} of $R$ and there are ways of generalizations of the total graph. For surveys on this topic see also \cite{SurveyAB, SurveyKN}.
\par
Another way to associate a graph with a ring was given by A.~Badawi in 2014, \cite{Annih}, where he introduced the annihilator graph denoted by $AG(R)$. The set of vertices is the same as the set of vertices of zero-divisor graph $Z^*(R)$, but two distinct vertices $x$ and $y$ are adjacent if and only if $\ann_{R}(xy)\neq \ann_{R}(x)\cup \ann_{R}(y)$. It is clear that $\ann_{R}(x)\cup \ann_{R}(y)\subset \ann_{R}(xy)$ but the equality does not hold in general. In 2018, Sh.~Payrovi and S.~Babaei in \cite{Gen.ann} generalized $AG(R)$ to be the compressed annihilator graph $AG_{E}(R)$. The set of vertices is the set of equivalent classes of zero-divisors of $R$, $Z(R/\sim)^*$ and two distinct vertices $[x]_{\sim}$ and $[y]_{\sim}$ are adjacent if and only if $\ann_{R}(x)\cup \ann_{R}(y)\subsetneq \ann_{R}(xy)$. For a survey relate to this topic see \cite{SurveyAG}.
\par
There are also a generalization of graphs over module. One of those graphs associated with a module, the torsion graph, denoted by $\Gamma_{R}(M)$, was introduced by P.~Malakooti~Rad, S.~Yassemi, Sh.~Ghalandarzadeh and P.~Safari in \cite{Malakooti}, with an $R$-module $M$. The set of vertices of $\Gamma_{R}(M)$ is the set of non-zero torsion elements $T(M)^{*}$ where, $T(M)^*=\{m\in M\,\mid\, \ann_{R}(m)\neq\{0\}\}$ with two distinct vertices $x$ and $y$ are adjacent if and only if $\ann_{R}(x)\cap \ann_{R}(y)\neq\{0\}$. In their work, they studied in which case $\Gamma_{R}(M)$ is connected with $\dm(\Gamma_{R}(M))$ is less than or equal to three, the relationship between the diameter of $\Gamma_{R}(M)$ and $\Gamma_{R}(R)$ and proved that the girth of $\Gamma_{R}(M)$ belongs to $\{3,\infty\}$. Note that, $\Gamma_{R}(R)$ is a special case of the torsion graph $\Gamma_{R}(M)$ when we consider $R$ as an $R$-module. For other graphs over module see also, \cite{Module1, ZeroModule, Module3, Module4, Module5}
\par
In this work, we define a new graph, which we called the compressed intersection annihilator graph $IA(R)$, as the undirected graph whose set of vertices is $Z(R/\sim)^*=Z(R/\sim)\setminus\{[0]_{\sim}\}$ and two distinct vertices $[x]_{\sim}$ and $[y]_{\sim}$ are adjacent if and only if $\ann_{R}(x)\cap \ann_{R}(y)\neq\{0\}$ for some representatives $x$ and $y$. It's known that different elements from the ring may give the same annihilator ideal. Hence we compress all vertices in $\Gamma_R(R)$ that have the same non-zero annihilator ideal in one vertex. This graph is a generalization of the graph $\Gamma_{R}(R)$. Note that if $r\in \ann_{R}(x)\cap\ann_{R}(y)$ for some $x$ and $y$, then $r\in \ann_{R}(x+y)$. Thus if $[x]_{\sim}$ is adjacent to $[y]_{\sim}$ in $IA(R)$, then $x$ is adjacent to $y$ in $T(\Gamma(R))$ for all representatives $x$ and $y$. Therefore we may consider $IA(R)$ as a way to compress the total graph $T(\Gamma(R))$. The study of this new graph help us to understand the structure of the annihilator ideals and the relation between them. The adjacency relation is always reflexive and symmetric but is not usually transitive. So the compressed intersection annihilator graph measures this lack of transitivity in which the adjacency relation is transitive if and only if the graph is complete. In the second section, we study classes of rings for which the equivalence between $Z(R)$ being an ideal and the completeness of $IA(R)$ holds. Besides, we show that if the compressed intersection annihilator graph of a ring $R$ is finite, then there exist a subring $S$ of $R$ such that $IA(S)\cong IA(R)$. Also, we show that the compressed intersection annihilator graph will never be a complete bipartite graph. In the third section,  we generalize some results from \cite{Malakooti}. In addition, we show that the graph $IA(R)$ with at least three vertices is connected and its diameter is less than or equal to three. Also, we study the relation between $\Gamma_{R}(R)$ and $IA(R)$. In the fourth section, we investigate the properties of the graph when $R=\mathbb{Z}_n$ and show that if $n$ is divisible by at least three primes, then the graph $IA(\mathbb{Z}_n)$ is connected and determine its diameter and girth. In the last section, we study the graph when $R$ is the finite direct product of integral domains with non-zero identities. This case shows an example of finite graph of an infinite ring and also, shows an example of isomorphic graphs of non-isomorphic rings. Also, we show that the graph is connected with diameter equal to two and girth equal to three when $R$ is the direct product of Artinine local rings with non-zero identities. Finally, we show that the graph $IA(R)$ is connected and not complete with diameter is less than or equal to three when $R$ is the direct product of two rings such that one of them is not an integral domain.
\par
Let $G$ be a simple undirected graph. Two different elements $x$ and $y$, from the set of vertices of $G$, are \textit{adjacent} if $\{x,\,y\}$ is an element in the set of edges of $G$ and denote it by $x-y$. A \textit{path} in a graph is a sequence of distinct vertices $x_1$, $x_2$, ..., $x_n$ such that $x_i-x_{i+1}$ for all $1\leq i<n$. The graph $G$ is \textit{connected} if there is a path between any two distinct vertices of $G$. A \textit{complete bipartite graph} is a graph whose set of vertices can be partitioned into two disjoint sets, say $A$ and $B$, in which every vertex in $A$ is adjacent to every vertex in $B$ and there is no two vertices in either $A$ or $B$ are adjacent. Its denoted by $K^{m,n}$, where $|A|=m$ and $|B|=n$. A \textit{complete graph} is a graph such that every two different vertices are adjacent and it is denoted by $K^{n}$, where $n$ is the number of vertices. On the other hand, $G$ is said to be \textit{totally disconnected} if there are no adjacent vertices. The \textit{distance}, $d(x,y)$, between two vertices $x$ and $y$ in $G$ is the length of a shortest path from $x$ to $y$ if there is a path, $d(x,x)=0$ and $d(x,y)=\infty$ otherwise. The \textit{diameter} of $G$ is defined by {$\dm(G)=\sup\{ d(x,y)\mid x\text{ and } y\text{ are vertices of } G\}$}. A \textit{cycle} is a closed path consists of more than or equal to three vertices which starts and ends at the same vertex. The \textit{girth} of $G$ is the length of a shortest cycle in $G$, it is denoted by $\gr(G)$ ($\gr(G)=\infty$ if $G$ has no cycles).
\par
An ideal $I$ of $R$ is called an \textit{annihilator ideal} if there is $a\in R$ such that $I=\ann_{R}(a)$. An element $x\in R$ is said to be a \textit{nilpotent element} if there is an integer $n\geq2$ such that $x^n=0$ and $x^{n-1}\neq 0$. Clearly any nilpotent element is a zero divisor. An element $a\in R$ is an \textit{idempotent element} if $a^2=a$. $R$ is defined to be a \textit{local ring} if it has a unique maximal ideal. $R$ is a \textit{von Neumann regular ring} if for every element $a\in R$ there exists $x\in R$ such that $a=a^{2}x$. A \textit{Noetherian ring} is a ring that satisfies the ascending chain condition (a.c.c) on ideals. This means that there is no infinite ascending sequence of ideals. $R$ is said to be an \textit{Artinian ring} if it satisfies the descending chain condition (d.c.c) on ideals. This means that there is no infinite descending sequence of ideals. Note that, any Artinian ring is a Noetherian ring but the converse is not generally true. $R$ is said to have a \textit{finite Goldie dimension} if it does not contain infinite direct sums of non-zero ideals.

%Let $R$ be a commutative ring with $1\neq 0$. We define a relation $\sim$ on $R$ for $x,y\in R$, $x\sim y$ if and only if $\ann_{R}(x)=\ann_{R}(y)$ and it was proved that $\sim$ is multiplicative congruence relation on $R$ in []. By definition of the relation ${[0]_{\sim}=\{x\in R\mid \ann(x)=R\}=\{0\}}$, $[1]_{\sim}=\{{x\in R}\mid \ann(x)=\{0\}\}=R\setminus Z(R)$ and thus for each $x\in R\setminus ([0]_{\sim}\cup[1]_{\sim})$, we have $[x]_{\sim}\in Z(R)^\ast$. Consider the set ${R/\sim}=\{[x]_{\sim}\mid x\in R\}$ of all congruence classes with multiplication given by $[x]_{\sim}[y]_{\sim}=[xy]_{\sim}$ for all $x, y\in R$ and its well defined []. Thus $R/\sim$ is commutative monoid with identity $[1]_{\sim}$ and zero $[0]_{\sim}$ under the operation of multiplication defined.
%Also, we must note that ${R/\sim}=\{[0]_\sim\}$ if and only if $[0]_{\sim}=[1]_{\sim}$ if and only if ${\sim}=R\times R$ in []. Thus for any others congruence relations, we have $[0]_{\sim}\neq[1]_{\sim}$. We have $Z({R/\sim})$ is denoted by the set of "zero divisors" of ${R/\sim}$, $Z({R/\sim})=\{[x]_{\sim}\in{R/\sim}\,\mid \, [x]_{\sim}[y]_{\sim}=[0]_{\sim}\, \text{for some } {[y]_{\sim}}\in (R/\sim)\setminus{[0]_{\sim}}\}$.

\end{Section}

%%%%%%%%%%%%%%%%%%%%%%%%%%%%%%%%% Section 2: Compressed intersection aanilihator graph %%%%%%%%%%%%%%%%%%%%%%%%%%%%%%%%%%%%%%%%

\begin{Section}{Compressed Intersection Annihilator Graph}
\noindent
Let $\sim$ be the multiplicative congruence relation, defined on $R$ by $x\sim y$ if and only if $\ann_{R}(x)=\ann_{R}(y)$. Let $[x_1]_{\sim}=[x_2]_{\sim}$ and $[y_1]_{\sim}=[y_2]_{\sim}$, which means that $\ann_{R}(x_1)=\ann_{R}(x_2)$ and $\ann_{R}(y_1)=\ann_{R}(y_2)$. If $r\in {{\ann_{R}(x_1)} \cap {\ann_{R}(y_1)}}$, then $r\in\ann_{R}(x_1)$ and $r\in\ann_{R}(y_1)$, it follows that $r\in\ann_{R}(x_2)$ and $r\in\ann_{R}(y_2)$, which means that $r\in \ann_{R}(x_2)\cap \ann_{R}(y_2)$. This shows that the adjacency is well defined in the following
\begin{definition}
   The compressed intersection annihilator graph is a simple undirected graph where the set of vertices is \hbox{$Z(R/\sim)^*=Z(R/\sim)\setminus\{[0]_{\sim}\}$} and the adjacency between any two different vertices $[x]_{\sim}$ and $[y]_{\sim}$, $[x]_{\sim}-[y]_{\sim}$ if and only if ${\ann_{R}(x)}\cap{\ann_{R}(y)}\neq \{0\}$ for some representative $x$ and $y$. It is denoted by $IA(R)$.
\end{definition}

Note that if $r\in \ann_{R}(x)\cap\ann_{R}(y)$ for some $x$ and $y$, then $r\in\ann_{R}(x+y)$. The converse is not always true. In fact, consider the ring $R=\mathbb{Z}_6$. We have that $4+5=3\in Z^*(R)$ and $2\in\ann_{R}(4+5)$. But $2\notin\ann_{R}(4)$ and $2\notin\ann_{R}(5)$. It follows that, if $[x]_{\sim}$ is adjacent to $[y]_{\sim}$ in $IA(R)$, then $x$ is adjacent to $y$ in $T(\Gamma(R))$ for all representations $x$ and $y$. We can find an injective maps from $V(IA(R))$ to $V(T(\Gamma(R)))$ and from $E(IA(R))$ to $E(T(\Gamma(R)))$. For example, let $f:V(IA(R))\rightarrow V(T(\Gamma(R)))$ defined by $[x]_{\sim}\mapsto x$ for some representative $x$ and define the map $g_{f}:E(IA(R))\rightarrow E(T(\Gamma(R)))$ defined by $([x]_{\sim},[y]_{\sim})\mapsto(f(x),f(y))$.

\begin{example}
 Figure~\ref{graph} represents the graph $IA(R)$, where $R=\mathbb{Z}_3\times \mathbb{Z}_3$. Figure~\ref{graph1} shows the graph $IA(R)$, for $R=\mathbb{Z}_{12}$. In the latter case, we can easily check that $\ann_{R}(2)=\ann_{R}(10)$, $\ann_{R}(3)=\ann_{R}(9)$ and $\ann_{R}(4)=\ann_{R}(8)$. Thus the set of vertices is $Z(R/\sim)^*=\{[2]_{\sim},\,[3]_{\sim},\,[4]_{\sim},\,[6]_{\sim}\}$.

\begin{figure}[H]%
 \begin{minipage}{.5\textwidth}%

  \centering
   \begin{tikzpicture}[scale=0.7,every node/.style={fill, circle,draw,scale=.5}]
    \node(n1) at (-1,1){};
    \node(n2) at (1,1){};
    \node[draw=none,rectangle, above=2mm,fill=none] (n1) at (-1,1){$[(0,1)]_{\sim}$};
    \node[draw=none,rectangle, above=2mm,fill=none] (n2) at (1,1) {$[(1,0)]_{\sim}$};
   \end{tikzpicture}

  \caption{\label{graph}$IA(R)$} {\footnotesize{$R=\mathbb{Z}_3\times \mathbb{Z}_3$}}%
 \end{minipage}%
\qquad
 \begin{minipage}{1.4in}%
   \centering

  \begin{tikzpicture}[scale=0.7,every node/.style={fill, circle,draw,scale=.5}]
    \node (n1) at (-1,-1){};
    \node (n2) at (0,1){};
    \node (n3) at (1,-1){};
    \node (n4) at (0,0){};
    \draw[-] (n1)--(n4);
    \draw[-] (n1)--(n3);
    \draw[-] (n4)--(n2);
    \draw[-] (n4)--(n3);
    \node[draw=none,rectangle, below=2mm,fill=none] (n1) at (-1,-1) {$[2]_{\sim}$};
    \node[draw=none,rectangle, above=2mm,fill=none] (n2) at (0,1)   {$[3]_{\sim}$};
    \node[draw=none,rectangle, below=2mm,fill=none] (n3) at (1,-1)  {$[4]_{\sim}$};
    \node[draw=none,rectangle, right=2mm,fill=none] (n4) at (0,0)   {$[6]_{\sim}$};
  \end{tikzpicture}
  \caption{\label{graph1}$IA(R)$}{\footnotesize{$R=\mathbb{Z}_{12}$}}%
 \end{minipage}
\end{figure}
\end{example}

We notice from the definition of the graph $IA(R)$ that it is an empty graph if and only if $Z(R)=\{0\}$. Clearly, for the extreme case, if $\ann_{R}(R)\neq\{0\}$, then $IA(R)$ is complete. But the converse is not always true.
\par
We study an essential property to a ring which is the set of zero divisors being an ideal of $R$. This give us a briefly information about the total graph $T(\Gamma(R))$ as D.~F.~Anderson and A.~Badawi in \cite{Total} breaks its study into two cases depending on whether or not $Z(R)$ is an ideal.
\par
In the next theorem, we show that if the graph $IA(R)$ is complete, then $Z(R)$ is an ideal. In theorem~\ref{thann}, corollary~\ref{finite} and theorem~\ref{Goldie}, we show that the converse is true when the set of zero divisors $Z(R)$ is an annihilator ideal, $R$ is finite or $R$ has a finite Goldie dimension.

\begin{theorem}\label{ideal}
  If the graph $IA(R)$ is complete, then $Z(R)$ is an ideal.
\end{theorem}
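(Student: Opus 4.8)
The plan is to reduce the statement to showing that $Z(R)$ is closed under addition. Indeed, $Z(R)$ always absorbs multiplication by $R$: if $a\in Z(R)$, say $ax=0$ with $x\neq 0$, then $(ra)x=0$ for every $r\in R$, so $ra\in Z(R)$. Hence, once we know $Z(R)$ is closed under addition (it is automatically closed under negation, since $-a\in Z(R)$ whenever $a\in Z(R)$), it follows that $Z(R)$ is an ideal.

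So I would fix $a,b\in Z(R)$ and aim to prove $a+b\in Z(R)$. The degenerate situations are disposed of immediately: if $a=0$ then $a+b=b\in Z(R)$, similarly if $b=0$, and if $a+b=0$ then trivially $a+b\in Z(R)$. Thus I may assume $a\neq 0$, $b\neq 0$ and $a+b\neq 0$, in which case $[a]_{\sim}$ and $[b]_{\sim}$ are vertices of $IA(R)$. The crux is then to produce a common non-zero annihilator of $a$ and $b$. If $[a]_{\sim}=[b]_{\sim}$, then $\ann_{R}(a)=\ann_{R}(b)$, which is non-zero because $a\in Z^{*}(R)$. If $[a]_{\sim}\neq[b]_{\sim}$, then by completeness of $IA(R)$ the edge $[a]_{\sim}-[b]_{\sim}$ is present, and since adjacency is independent of the chosen representatives (the well-definedness observation preceding the definition), this gives $\ann_{R}(a)\cap\ann_{R}(b)\neq\{0\}$. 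Either way pick $0\neq r\in\ann_{R}(a)\cap\ann_{R}(b)$; then $r(a+b)=ra+rb=0$ with $r\neq 0$ and $a+b\neq 0$, so $a+b$ is a non-zero zero-divisor and in particular lies in $Z(R)$.

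I do not anticipate a genuine obstacle: the content of the argument is the single observation that completeness of $IA(R)$ yields, for any two non-zero zero-divisors, a common non-zero annihilator, which then annihilates their sum. The only care needed is that the graph sees neither $0$ nor pairs summing to $0$, so those cases must be handled separately before the completeness hypothesis can be invoked.
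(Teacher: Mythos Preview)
Your proof is correct and follows essentially the same approach as the paper: reduce to closure under addition, then use completeness to obtain a common nonzero annihilator of any two nonzero zero-divisors, which then annihilates their sum. You are in fact slightly more careful than the paper, since you separately handle the case $[a]_{\sim}=[b]_{\sim}$ (where completeness does not apply but $\ann_{R}(a)=\ann_{R}(b)\neq\{0\}$ directly), whereas the paper tacitly folds this into the main case.
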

\begin{proof}
    Assume that $IA(R)$ is a complete graph. Let $x,\, y \in Z(R)$. We have two cases, the first one when $[x]_\sim$ and $[y]_\sim$ are two elements in $Z(R/\sim)^*$, then ${\ann_{R}(x)\cap \ann_{R}(y)}\neq{0}$. Hence there is $r\neq0$ such that $r\in{\ann_{R}(x)\cap \ann_{R}(y)}$ and so, $r(x+y)=rx+ry=0$. Therefore $x+y\in Z(R)$. In the second case, we have $[x]_{\sim}$ or $[y]_{\sim}$ is zero, and $x+y\in Z(R)$. Therefore $Z(R)$ is an ideal.
\end{proof}

The last theorem implies also from~\cite{Total} that if the graph $IA(R)$ is complete, then $T(\Gamma(R))$ is disconnected graph breaks into two components the complete induced subgraph $Z(\Gamma(R))$ of $T(\Gamma(R))$ with vertices $Z(R)$ and the induced subgraph $Reg(\Gamma(R))$ of $T(\Gamma(R))$ with vertices $Reg(R)$ the set of regular elements of $R$. Also, they characterize the graph $Reg(\Gamma(R))$ depending on whether or not $2$ is in $Z(R)$.

\begin{theorem}\label{thann}
  If $Z(R)$ has a non-zero annihilator, then $IA(R)$ is a complete graph.
\end{theorem}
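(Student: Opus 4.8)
The plan is to produce a single non-zero element of $R$ that simultaneously witnesses the adjacency of every pair of vertices, so that completeness follows in one stroke. By hypothesis there is an $r\in R$ with $r\neq 0$ and $rz=0$ for all $z\in Z(R)$; that is, $r\in\ann_{R}(Z(R))\setminus\{0\}$.

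First I would set aside the degenerate case $Z(R)=\{0\}$, where $Z(R/\sim)^{*}=\emptyset$ and $IA(R)$ is the empty graph, hence vacuously complete. Assuming $Z(R)\neq\{0\}$, take any two distinct vertices $[x]_{\sim}$ and $[y]_{\sim}$ of $IA(R)$. Since every congruence class appearing as a vertex is contained in $Z^{*}(R)\subseteq Z(R)$, any representatives $x,y$ lie in $Z(R)$, so $rx=0$ and $ry=0$ and therefore $r\in\ann_{R}(x)\cap\ann_{R}(y)$ with $r\neq 0$. Thus $\ann_{R}(x)\cap\ann_{R}(y)\neq\{0\}$, so $[x]_{\sim}-[y]_{\sim}$; as the two vertices were arbitrary, $IA(R)$ is complete.

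I do not expect any real obstacle here: the statement is essentially immediate from the definition once the non-zero annihilator of $Z(R)$ is in hand. The only two points that deserve a line of care are the well-definedness of the adjacency relation under a change of representatives — already recorded in the paragraph preceding the definition — and the trivial empty-graph case $Z(R)=\{0\}$. One may also note in passing that when $Z(R)\neq\{0\}$ the element $r$ is itself a non-zero zero-divisor, so $[r]_{\sim}$ is actually one of the vertices; this is consistent with the argument but is not needed for it.
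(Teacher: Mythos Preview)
Your argument is correct and is essentially the same as the paper's: pick a nonzero $a\in\ann_{R}(Z(R))$, observe that $a\in\ann_{R}(x)\cap\ann_{R}(y)$ for any representatives $x,y$ of two distinct vertices, and conclude adjacency. The only differences are cosmetic --- you separate out the empty case $Z(R)=\{0\}$ and add a side remark about $r$ being a vertex --- neither of which changes the substance.
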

\begin{proof}
  Assume that $Z(R)$ has a non-zero annihilator. Let $a\in\ann_{R}(Z(R))$ for some $a\neq0$. Let $x,\,y\in Z^{*}(R)$ be such that $[x]_\sim\neq[y]_\sim$, i.e.~$\ann_{R}(x)\neq \ann_{R}(y)$. Then $xa=0$ and $ya=0$. This means that $a\in \ann_{R}(x)\cap\ann_{R}(y)$. Thus $[x]_\sim$ is adjacent to $[y]_\sim$ which implies the completeness of $IA(R)$.
\end{proof}

Recall that from~\cite{Atiyah}, $Z(R)$ is an ideal if and only if $Z(R)$ is a prime ideal. In case, when $R$ is Artinian ring, we have the following results:
 \begin{itemize}
   \item Each prime ideal of $R$ has a non-zero annihilator (Since each prime ideal in an Artinian ring is a maximal ideal and each maximal ideal is also a minimal prime ideal. And each minimal prime ideal in a Notherian ring has a non-zero annihilator).
   \item Each non-unit of $R$ is nilpotent if and only if $R$ is local.
 \end{itemize}
  Moreover, as we know for the Artinian local ring $R$, the maximal ideal is $Z(R)$ and it is an annihilator ideal. Therefore, from the previous theorem $IA(R)$ is complete.

Theorem~\ref{Goldie} is a direct result from theorem~\ref{Z(R)} which is proved by M.~Filipowicz and M.~K\c{e}pczyk in \cite{ZeroDivisor}.
% Recall the next theorem from \cite{ZeroDivisor} which is searching for a class of rings that makes this statement is true: \\
 %"If the set of zero divisors of $R$ is an ideal of $R$, then every finite set of zero divisors of $R$ has a non-zero annihilator."\\
 %The authors proved this fact in the class of rings with finite Goldie dimension, as an example, the class of Noetherian rings.

\begin{theorem}[Theorem 3.4, \cite{ZeroDivisor}]\label{Z(R)}
  If a proper ring $R$ has a finite Goldie dimension, then every finitely generated ideal of $R$, consisting of zero-divisors, has a non-zero annihilator.
\end{theorem}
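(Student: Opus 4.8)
The plan is to use the injective hull of $R$ to show that finite Goldie dimension forces the set of zero-divisors $Z(R)$ to be a \emph{finite} union of prime ideals; granting this, the statement follows at once from prime avoidance together with a routine essentiality argument. I work in the commutative case, which is what is needed here.

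Let $E = E(R)$ be the injective hull of $R$ as an $R$-module. As an essential extension of $R$ it again has finite Goldie dimension, and, being injective, it is therefore a finite direct sum $E = E_1 \oplus \dots \oplus E_d$ of indecomposable injective submodules. Each $E_j$ is uniform, and $\mathrm{End}_R(E_j)$ is a local ring (a standard fact for indecomposable injectives), its maximal ideal being exactly the set of endomorphisms with nonzero kernel, since an injective endomorphism of the indecomposable injective $E_j$ is automatically invertible. For $a \in R$ write $\lambda_a \in \mathrm{End}_R(E)$ for multiplication by $a$ and $\lambda_a^{(j)}$ for its restriction to $E_j$; since $R$ is commutative, $\lambda_a$ commutes with every element of $\mathrm{End}_R(E)$, in particular with the projections onto the $E_j$, so that $\ker \lambda_a = \bigoplus_{j} \ker \lambda_a^{(j)}$. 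Put $P_j = \{\, a \in R : \ker \lambda_a^{(j)} \neq 0 \,\}$; since $R/P_j$ embeds into the residue division ring of $\mathrm{End}_R(E_j)$, each $P_j$ is a prime ideal of $R$.

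Next I would prove $Z(R) = P_1 \cup \dots \cup P_d$. If $a \in Z(R)$, choose $0 \neq b \in R$ with $ab = 0$; then $b \in \ker \lambda_a = \bigoplus_{j} \ker \lambda_a^{(j)}$, so $\lambda_a^{(j)}$ has nonzero kernel for some $j$, i.e.\ $a \in P_j$. Conversely, if $a \in P_j$ then $\ker \lambda_a^{(j)}$ is a nonzero submodule of $E$; as $R$ is essential in $E$, it meets $\ker \lambda_a^{(j)}$ nontrivially, and any nonzero element of the intersection lies in $\ann_{R}(a)$, whence $a \in Z(R)$.

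Finally, let $I = (a_1, \dots, a_n)$ be a finitely generated ideal consisting of zero-divisors, so $I \subseteq Z(R) = P_1 \cup \dots \cup P_d$. By prime avoidance $I \subseteq P_j$ for some $j$, hence every $\lambda_{a_i}^{(j)}$ has nonzero kernel; since $E_j$ is uniform, each $\ker \lambda_{a_i}^{(j)}$ is an essential submodule of $E_j$, and a finite intersection of essential submodules is essential, so $N := \bigcap_{i=1}^{n} \ker \lambda_{a_i}^{(j)}$ is a nonzero submodule of $E_j$. Using once more that $R$ is essential in $E$, pick $0 \neq r \in N \cap R$; then $a_i r = 0$ for all $i$, so $0 \neq r \in \ann_{R}(I)$, as required. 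The crux — and the only place the Goldie hypothesis is genuinely used — is the identification $Z(R) = \bigcup_{j} P_j$, which rests on the finite decomposition of $E(R)$ into indecomposable injectives and on their being uniform with local endomorphism rings; the prime-avoidance reduction and the essentiality bookkeeping are then routine. (In the not-necessarily-commutative generality of \cite{ZeroDivisor} one replaces this by the corresponding one-sided Goldie theory.)
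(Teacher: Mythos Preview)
The paper does not actually prove this statement: Theorem~\ref{Z(R)} is simply quoted from \cite{ZeroDivisor} and then invoked to obtain Theorem~\ref{Goldie}, so there is no ``paper's own proof'' to compare against.

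That said, your argument is correct in the commutative setting (which is all the present paper requires). The decomposition $E(R)=E_1\oplus\cdots\oplus E_d$ into indecomposable injectives is available precisely because finite Goldie dimension passes to the essential extension $E(R)$; each $E_j$ is then uniform with local endomorphism ring, and your identification of $P_j$ as the pullback of the maximal ideal of $\mathrm{End}_R(E_j)$ under $a\mapsto\lambda_a^{(j)}$ does show that $P_j$ is prime. The two inclusions in $Z(R)=\bigcup_j P_j$ are verified correctly, and the endgame via prime avoidance plus ``finite intersections of essential submodules of a uniform module are essential'' is clean. One cosmetic point: you do not really need to invoke the residue division ring to see that $P_j$ is prime; it is immediate that if $\lambda_{ab}^{(j)}$ kills some $0\neq x\in E_j$ then either $\lambda_b^{(j)}(x)=0$ or $0\neq\lambda_b^{(j)}(x)\in\ker\lambda_a^{(j)}$.

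I cannot compare your route with that of \cite{ZeroDivisor} itself from the material given here, but your proof is a standard and self-contained one: it is essentially the classical fact that a commutative ring of finite uniform dimension has $Z(R)$ equal to a finite union of (associated) primes, packaged together with prime avoidance. If anything, your parenthetical about the non-commutative case is the only place where more would need to be said, but that lies outside the scope of this paper.
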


One interpretation of theorem~\ref{Z(R)} is the following statement:
\begin{quote}
``If a proper ring $R$ has a finite Goldie dimension and $Z(R)$ is an ideal of $R$, then every finite set of zero divisors of $R$ has a non-zero annihilator".
\end{quote}
As a consequence of theorems~\ref{Z(R)} and \ref{ideal} and the definition of the graph, the following theorem holds.

\begin{theorem}\label{Goldie}
  Let $R$ be a ring with finite Goldie dimension. $Z(R)$ is an ideal if and only if $IA(R)$ is a complete graph.
\end{theorem}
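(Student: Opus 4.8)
The plan is to prove both directions of the equivalence using the results already established. The forward direction is immediate: if $IA(R)$ is complete, then by Theorem~\ref{ideal} the set $Z(R)$ is an ideal, and this holds for any ring, no Goldie hypothesis needed.

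For the converse, suppose $R$ has finite Goldie dimension and $Z(R)$ is an ideal. I want to show $IA(R)$ is complete. By Theorem~\ref{thann}, it suffices to show that $Z(R)$ has a non-zero annihilator. This is where Theorem~\ref{Z(R)} enters, but one must be careful: Theorem~\ref{Z(R)} only guarantees a non-zero annihilator for \emph{finitely generated} ideals consisting of zero-divisors, whereas $Z(R)$ itself may fail to be finitely generated. So the argument cannot apply Theorem~\ref{Z(R)} directly to $Z(R)$. Instead I would argue vertex-pair by vertex-pair: given two distinct vertices $[x]_\sim$ and $[y]_\sim$ of $IA(R)$, both $x$ and $y$ lie in $Z(R)$, so the finitely generated ideal $(x,y)$ is contained in the ideal $Z(R)$ and hence consists of zero-divisors; by Theorem~\ref{Z(R)} it has a non-zero annihilator $a$, and then $a\in \ann_R(x)\cap\ann_R(y)$ with $a\neq 0$, so $[x]_\sim - [y]_\sim$. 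Since the two vertices were arbitrary, $IA(R)$ is complete. (This is essentially the content of the "one interpretation" quotation in the excerpt, applied to the two-element set $\{x,y\}$.)

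I should also dispose of the trivial/degenerate cases so the statement is not vacuous or misleading: if $Z(R)=\{0\}$ then $IA(R)$ is the empty graph, which is vacuously complete, and indeed $\{0\}$ is an ideal; if $IA(R)$ has a single vertex it is also complete. These edge cases are handled automatically by the argument above since there are no pairs of distinct vertices to connect, but it is worth a remark that "complete" here includes these degenerate graphs, consistent with the convention used elsewhere in the paper (e.g.\ the remark that $IA(R)$ is empty iff $Z(R)=\{0\}$, and that $\ann_R(R)\neq\{0\}$ forces completeness).

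The main obstacle, and the only real subtlety, is resisting the temptation to feed $Z(R)$ itself into Theorem~\ref{Z(R)}: since $Z(R)$ need not be finitely generated, one genuinely needs the localized argument on the pair $(x,y)$, using that a sub-ideal of the ideal $Z(R)$ automatically consists of zero-divisors. Everything else is a direct citation of Theorems~\ref{ideal}, \ref{thann}, and \ref{Z(R)}, assembled in the order: (i) complete $\Rightarrow$ $Z(R)$ ideal by Theorem~\ref{ideal}; (ii) $Z(R)$ ideal $\Rightarrow$ every $(x,y)\subseteq Z(R)$ is finitely generated of zero-divisors $\Rightarrow$ has non-zero annihilator by Theorem~\ref{Z(R)} $\Rightarrow$ $[x]_\sim$ and $[y]_\sim$ adjacent; (iii) conclude completeness.
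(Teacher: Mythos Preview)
Your proposal is correct and follows essentially the same route as the paper. The paper's own proof is simply the one-line remark that the theorem is ``a consequence of theorems~\ref{Z(R)} and \ref{ideal} and the definition of the graph,'' together with the preceding interpretation that under the hypotheses every \emph{finite} set of zero-divisors has a non-zero annihilator; your pair-by-pair argument with the finitely generated ideal $(x,y)$ is exactly the intended unpacking of that remark, and your caution about not feeding $Z(R)$ itself into Theorem~\ref{Z(R)} is precisely why the paper phrases its interpretation in terms of finite sets rather than invoking Theorem~\ref{thann}.
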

 It is clear that, theorem~\ref{Goldie} holds for Noetherian rings as they have a finite Goldie dimension.
\par
The following theorem shows that we have only to consider Noetherian rings in order to find finite graphs.
\begin{theorem}
  If $IA(R)$ is finite graph, then there exists a Noetherian subring $S$ of $R$ such that $IA(S)\cong IA(R)$.
\end{theorem}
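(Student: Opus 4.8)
The plan is to realize $S$ as the subring of $R$ generated by a finite set of elements; then $S$ is a finitely generated $\mathbb{Z}$-algebra and hence Noetherian by the Hilbert basis theorem. The whole point is to choose the generators so that all the combinatorial data encoded in $IA(R)$ survives the passage from $R$ to $S$. Since $IA(R)$ is finite, its vertex set is $\{[x_1]_{\sim},\dots,[x_n]_{\sim}\}$ for some $x_1,\dots,x_n\in Z^*(R)$; fix these representatives. For each $i$ choose $a_i\in\ann_{R}(x_i)\setminus\{0\}$ (possible because $x_i$ is a zero-divisor). For each edge $[x_i]_{\sim}-[x_j]_{\sim}$ of $IA(R)$ choose $a_{ij}\in(\ann_{R}(x_i)\cap\ann_{R}(x_j))\setminus\{0\}$, and for each pair $i\neq j$ choose $b_{ij}$ in the symmetric difference $\ann_{R}(x_i)\,\triangle\,\ann_{R}(x_j)$, which is non-zero since $[x_i]_{\sim}\neq[x_j]_{\sim}$. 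Let $S$ be the subring of $R$ generated by all the $x_i$, $a_i$, $a_{ij}$ and $b_{ij}$; there are finitely many of these, and $\ann_{S}(z)=\ann_{R}(z)\cap S$ for every $z\in S$.

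Next I would verify that the assignment sending the vertex $[x_i]_{\sim}$ of $IA(R)$ to the class $[x_i]_{\sim}$ computed in $S$ is a graph isomorphism $IA(R)\to IA(S)$, by checking four things. (i) Each $x_i$ is a non-zero zero-divisor of $S$, because $a_i\in S$; so the $[x_i]_{\sim}$ genuinely are vertices of $IA(S)$. (ii) The assignment is injective: if $i\neq j$ then $b_{ij}\in S$ belongs to exactly one of $\ann_{S}(x_i)$, $\ann_{S}(x_j)$, so these annihilator ideals differ. (iii) It is surjective: if $y\in Z^*(S)$ then $y\in Z^*(R)$, hence $\ann_{R}(y)=\ann_{R}(x_i)$ for some $i$, so $\ann_{S}(y)=\ann_{R}(y)\cap S=\ann_{S}(x_i)$ and $[y]_{\sim}=[x_i]_{\sim}$ in $S$; thus $IA(S)$ has no vertices beyond the $[x_i]_{\sim}$. (iv) It preserves and reflects adjacency: if $[x_i]_{\sim}-[x_j]_{\sim}$ in $IA(R)$ then $a_{ij}$ is a non-zero element of $\ann_{S}(x_i)\cap\ann_{S}(x_j)$, and conversely any non-zero common annihilator in $S$ is one in $R$.

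The only delicate point is the interaction between annihilators taken in $S$ and those taken in $R$: passing to the subring can only shrink annihilator ideals and only shrink the set of zero-divisors. So the conceivable failure modes are two distinct vertices of $IA(R)$ collapsing, an edge vanishing, or a spurious new vertex appearing. Surjectivity (a zero-divisor of $S$ is automatically a zero-divisor of $R$) rules out the last for free, while the first two are precisely what the deliberately inserted elements $b_{ij}$ and $a_{ij}$ block; finiteness of $IA(R)$ is exactly what keeps the number of such witnesses — and hence $S$ — finitely generated. The remaining facts, that $S$ is a subring containing $1$ and that a finitely generated $\mathbb{Z}$-algebra is Noetherian, are routine.
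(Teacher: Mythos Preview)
Your proof is correct and follows essentially the same strategy as the paper's: build $S$ as a finitely generated (hence Noetherian) subring containing enough witnesses, then check that the obvious map on vertex sets is a graph isomorphism. Your verification of the four points (i)--(iv) is in fact more careful than the paper's, which asserts the analogues of (iii) and (iv) without justification.

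The one refinement the paper makes, which you might enjoy, is that the auxiliary witnesses $a_i$, $a_{ij}$, $b_{ij}$ are unnecessary: one can take $S$ to be generated by $\{x_1,\dots,x_n,1\}$ alone. The reason is that every such witness is itself a non-zero zero-divisor of $R$, hence $\sim$-equivalent to some $x_k$, and the relevant property transfers to $x_k$. For instance, if $b_{ij}\in\ann_R(x_i)\setminus\ann_R(x_j)$ and $\ann_R(b_{ij})=\ann_R(x_k)$, then $x_i\in\ann_R(b_{ij})=\ann_R(x_k)$ while $x_j\notin\ann_R(x_k)$, so $x_k\in\ann_R(x_i)\setminus\ann_R(x_j)$ and $x_k$ serves as the distinguishing element; the same trick handles the $a_i$ and $a_{ij}$. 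This gives a slightly smaller $S$, but your version is equally valid and arguably more transparent.
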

\begin{proof}
  Assume that $IA(R)$ is finite. Let $|V(IA(R))|=n<\infty$ and $V(IA(R))=Z(R/\sim)^*=\{[x_i]_\sim|\, x_i\in Z^*(R),\, 1\leq i\leq n\}$. Then $\ann_R(x_i)\neq \ann_R(x_j) $ for all $i\neq j$, $1\leq i,j\leq n$. Hence for every $i\neq j$, $1\leq i,j\leq n$ there exist $y_{ij}\in\ann_R(x_i)$ and $y_{ij}\notin\ann_R(x_j)$ or $y_{ij}\notin\ann_R(x_i)$ and $y_{ij}\in\ann_R(x_j)$. However, $\ann_R(y_{ij})=\ann_R(x_k)$ for some $k$. So we can choose $y_{ij}\in \{x_l|\, 1\leq l\leq n\}$. Let $S$ be the subring generated by $\{x_l|\,1\leq l\leq n\}\cup\{1\}$. Then $S$ is Noetherian. Clearly, $[x_i]_\sim=[x_j]_\sim$ if and only if $i=j$. Then $V(IA(S))=Z(S/\sim)^*=\{[x_i]_\sim|\, 1\leq i\leq n\}=V(IA(R))$. Also, for all $i$ and $j$, $\ann_S(x_i)\cap\ann_S(x_j)\neq\{0\}$ if and only if $\ann_R(x_i)\cap\ann_R(x_j)\neq\{0\}$. Therefore, $IA(S)\cong IA(R)$.
\end{proof}
The next theorem shows that the compressed intersection annihilator graph will never be a complete bipartite graph for any ring.
\begin{theorem}
  $IA(R)\neq K^{m,n}$ for all $m,n>1$.
\end{theorem}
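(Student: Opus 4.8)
The plan is to suppose, for contradiction, that $IA(R)=K^{m,n}$ with $m,n>1$, where the vertex set is partitioned into two parts $A$ and $B$ with $|A|=m$, $|B|=n$, every $A$--$B$ pair adjacent and no edges inside $A$ or inside $B$. First I would pick two distinct vertices $[x]_{\sim},[x']_{\sim}\in A$ and a vertex $[y]_{\sim}\in B$ (possible since $m\ge 2$ and $n\ge 1$). Since $[x]_{\sim}$ and $[x']_{\sim}$ are both adjacent to $[y]_{\sim}$, there are nonzero $r\in\ann_R(x)\cap\ann_R(y)$ and $s\in\ann_R(x')\cap\ann_R(y)$. The key observation is that $rs\in\ann_R(y)$ as well (indeed $rsy=0$), so the product $rs$ is a natural candidate for a common annihilator of $x$ and $x'$; if $rs\neq 0$ we would get $rs\in\ann_R(x)\cap\ann_R(x')$, making $[x]_{\sim}$ and $[x']_{\sim}$ adjacent — contradicting that $A$ is an independent set. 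So the real work is the case $rs=0$.

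When $rs=0$, the element $r$ (which is nonzero and lies in $Z^*(R)$, being annihilated by $s\ne 0$) determines a vertex $[r]_{\sim}$, and similarly $[s]_{\sim}$; moreover $r$ and $s$ annihilate each other, so $[r]_{\sim}$ and $[s]_{\sim}$ are adjacent (and $[r]_{\sim}\ne[s]_{\sim}$ unless $r^2=0$, a subcase to handle separately using $r\in\ann_R(r)$). I would then locate $[r]_{\sim}$ and $[s]_{\sim}$ within the bipartition: since they are adjacent, one lies in $A$ and the other in $B$. Now chase adjacencies: $r$ annihilates $x$ (an element of $A$) and $r$ annihilates $s$; since $r\in\ann_R(x)\cap\ann_R(y)$ with $[x]_\sim\in A$ and the edge $[x]_\sim$--$[y]_\sim$ present, and $r$ also annihilates $x'$? — not necessarily. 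The cleaner route is: $r\in\ann_R(x)$ means if $[r]_\sim\ne[x]_\sim$ then $[r]_\sim$--$[x]_\sim$ is an edge, forcing $[r]_\sim\in B$ (since $[x]_\sim\in A$); likewise $r\in\ann_R(y)$ with $[y]_\sim\in B$ forces, if $[r]_\sim\ne[y]_\sim$, that $[r]_\sim\in A$. These two conclusions conflict unless $[r]_\sim$ coincides with $[x]_\sim$ or with $[y]_\sim$. Running the same argument with $s$ in place of $r$ (using $s\in\ann_R(x')\cap\ann_R(y)$), and exploiting $m,n\ge 2$ to supply an extra vertex $[x']_\sim\in A\setminus\{[x]_\sim\}$ or $[y']_\sim\in B\setminus\{[y]_\sim\}$ to break the remaining ties, should close out every branch.

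The main obstacle I anticipate is the bookkeeping of degenerate subcases: $r^2=0$ or $s^2=0$ (so $[r]_\sim$ or $[s]_\sim$ need not be distinct from the vertices it annihilates, or $[r]_\sim$ might equal $[s]_\sim$), and the possibility that $[r]_\sim$ or $[s]_\sim$ is forced to equal one of $[x]_\sim,[x']_\sim,[y]_\sim$. The way I would manage this uniformly is to always have a \emph{spare} vertex available on each side — because $m\ge 2$ and $n\ge 2$, whenever one vertex of $A$ (resp.\ $B$) gets ``used up'' as an identification target, another remains — and to note that any nonzero element of an annihilator ideal is itself a vertex of $IA(R)$, so every nonzero $r,s,rs$ we produce is a bona fide vertex whose placement in $A\cup B$ is constrained by the adjacency rules. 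I expect the contradiction to emerge in each branch from a vertex being forced simultaneously into $A$ and into $B$, or from an edge appearing inside $A$ or inside $B$.
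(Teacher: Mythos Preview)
Your case $rs\neq 0$ is fine: indeed $rs\in\ann_R(x)\cap\ann_R(x')$ and you get an edge inside $A$. The problem is your handling of the case $rs=0$. There you repeatedly use the implication ``$r\in\ann_R(x)$, so if $[r]_\sim\neq [x]_\sim$ then $[r]_\sim$--$[x]_\sim$ is an edge.'' That is the adjacency rule for the (compressed) zero-divisor graph, \emph{not} for $IA(R)$. In $IA(R)$ two distinct classes $[u]_\sim,[v]_\sim$ are adjacent precisely when $\ann_R(u)\cap\ann_R(v)\neq\{0\}$, and $uv=0$ does not force that. For a concrete counterexample, in $\mathbb{Z}_6$ one has $2\cdot 3=0$, yet $\ann(2)=\{0,3\}$ and $\ann(3)=\{0,2,4\}$ intersect only in $0$, so $[2]_\sim$ and $[3]_\sim$ are not adjacent in $IA(\mathbb{Z}_6)$. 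Consequently the whole bipartition-chasing argument in your second paragraph collapses: from $r\in\ann_R(x)\cap\ann_R(y)$ you cannot place $[r]_\sim$ on either side. A partial repair is available: from $rs=0$ you get $r\in\ann_R(s)\cap\ann_R(x)$ and $r\in\ann_R(s)\cap\ann_R(y)$, which (correctly) makes $[s]_\sim$ adjacent to both $[x]_\sim\in A$ and $[y]_\sim\in B$ whenever the classes are distinct; but closing out the degenerate subcases $[s]_\sim=[x]_\sim$ or $[s]_\sim=[y]_\sim$ (and symmetrically for $[r]_\sim$) turns into a lengthy case chase that your outline does not cover.

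The paper avoids all of this by exploiting the additive structure instead of products of annihilators. It picks $[a_1]_\sim\in A$, $[b_1]_\sim\in B$, notes that any nonzero $r\in\ann_R(a_1)\cap\ann_R(b_1)$ annihilates $a_1+b_1$, so $[a_1+b_1]_\sim$ is a vertex lying in $A$ or $B$; a short argument then forces $[a_1+b_1]_\sim=[a_1]_\sim$ (or symmetrically $[b_1]_\sim$), and finally a nonzero $s\in\ann_R(a_1)\cap\ann_R(b_2)=\ann_R(a_1+b_1)\cap\ann_R(b_2)$ yields $sb_1=0$, producing the forbidden edge $[b_1]_\sim$--$[b_2]_\sim$. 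The key idea you are missing is this passage through the \emph{sum} $a_1+b_1$: it converts ``common annihilator'' information into an equality of annihilator classes, which your purely multiplicative approach never achieves.
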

\begin{proof}
  Suppose seeking a contradiction that $IA(R)=K^{m,n}$ for some $m,n>1$. Let $A=\{[a_i]_\sim| \, 1\leq i\leq m\}$ and $B=\{[b_i]_\sim| \, 1\leq i\leq n\}$ such that $Z(R/\sim)^*=A\cup B$ and $A\cap B=\emptyset$. Since that every element in $A$ is adjacent to every element in $B$, then $\ann_R(a_1)\cap\ann_R(b_1)\neq\{0\}$. Hence $a_1+b_1\in Z(R)$. If $a_1+b_1=0$, then $a_1=-b_1$ which implies that $\ann_R(a_1)=\ann_R(b_1)$ and this is a contradiction. It follows that $[a_1+b_1]_\sim\in Z(R/\sim)^*$. Then $[a_1+b_1]_\sim\in A$ or $[a_1+b_1]_\sim\in B$. Assume that $[a_1+b_1]_\sim\in A$. Then $[a_1+b_1]_\sim=[a_i]_\sim$ for some $i$. Since $IA(R)$ is complete bipartite graph, then there is $r\neq0$ such that $r\in \ann_R(b_1)\cap\ann_R(a_1)$. Thus $r\in\ann_R(a_i)$. Therefore $i=1$ which means that $[a_1+b_1]_\sim=[a_1]_\sim$. Since there exist $s\neq0$ such that $s\in\ann_R(b_2)\cap\ann_R(a_1)$, then $sb_2=0$ and $0=sa_1=s(a_1+b_1)=sa_1+sb_1=sb_1$. Therefore $[b_1]_\sim$ is adjacent to $[b_2]_\sim$ and this is a contradiction. Similarly, if $[a_1+b_1]_\sim\in B$. So $IA(R)\neq K^{m,n}$ for all $m,n>1$.
\end{proof}

\end{Section}

\begin{Section}{Diameter and Girth}
 In this section, we determine the diameter of the graph and its girth.
 Now, the following theorems~\ref{th1}, \ref{th2} and corollary~\ref{grth} are generalizations of theorems~$3.1$, $4.1$ and corollary~$4.2$ from \cite{Malakooti} respectively, if we consider $R$ as an $R$-module in the graph $\Gamma_{R}(R)$. Theorem \ref{th1} shows that if either the ring $R$ is a von Neumann regular ring and $R\ncong \ann_{R}(x)\oplus \ann_{R}(y)$ for any two distinct $x,\,y\in Z^{*}(R)$ or $\nl(R)\neq \{0\}$, then $IA(R)$ is connected with a diameter less than or equal to three. The proof of the theorem depends on an equivalent definition of the von Neumann regular ring that is every principal ideal is generated by an idempotent element.
 \par
 The proofs of the following theorems~\ref{th1}, \ref{th2} and corollary~\ref{grth} are analogues to the proofs of parts $2$, $3$ of theorem~$3.1$, theorem~$4.1$ and corollary~$4.2$ of \cite{Malakooti} respectively. We have just to replace $T(M)^*$ by $Z(R/\sim)^*$, $Ann(x)$ by $\ann_{R}(x)$ and the vertices in $\Gamma_{R}(M)$ by the vertices in $IA(R)$.

\begin{theorem}\label{th1}
  $IA(R)$ is connected with $\dm(IA(R))\leq 3$ if one of the following conditions holds:
    \begin{enumerate}
      \item {$R$ is a von Neumann regular ring and $R\ncong \ann_{R}(x)\oplus \ann_{R}(y)$ for any two distinct $x,\,y\in Z^{*}(R)$.}
      \item {$\nl(R)\neq \{0\}$ (i.e.~$R$ is not reduced).}
    \end{enumerate}
\end{theorem}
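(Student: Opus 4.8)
The plan is to prove the two cases separately and, in each, to show directly that any two distinct vertices $[x]_\sim,[y]_\sim$ are joined by a path of length at most two; this already yields the asserted bound $\dm(IA(R))\le 3$ (in fact $\le 2$), following the template of parts $2$ and $3$ of Theorem~$3.1$ of \cite{Malakooti}. In both cases I would first dispose of the subcase $\ann_{R}(x)\cap\ann_{R}(y)\neq\{0\}$, in which $[x]_\sim-[y]_\sim$ is an edge and the distance is $1$; so from now on assume $\ann_{R}(x)\cap\ann_{R}(y)=\{0\}$.

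For part~(2), I would use that $\nl(R)\neq\{0\}$ produces a square-zero element: choosing a nonzero nilpotent $z$ with $z^{k}=0$ and $z^{k-1}\neq0$ (so $k\ge 2$), the element $w=z^{k-1}$ satisfies $w\neq0$ and $w^{2}=0$, whence $w\in Z^{*}(R)$ and $[w]_\sim$ is a vertex of $IA(R)$. The key claim is that $[w]_\sim$ is adjacent to \emph{every} other vertex. Given a vertex $[v]_\sim\neq[w]_\sim$, pick $0\neq a\in\ann_{R}(v)$ and consider $aw$: if $aw\neq0$, then $aw\in\ann_{R}(v)$ (an ideal) and $w\cdot aw=aw^{2}=0$ gives $aw\in\ann_{R}(w)$, so $0\neq aw\in\ann_{R}(v)\cap\ann_{R}(w)$; if $aw=0$, then already $0\neq a\in\ann_{R}(v)\cap\ann_{R}(w)$. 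Either way $[v]_\sim-[w]_\sim$, so $[w]_\sim$ is a universal vertex and $[x]_\sim-[w]_\sim-[y]_\sim$ (collapsing to a single edge if $[w]_\sim$ coincides with $[x]_\sim$ or $[y]_\sim$) finishes this case.

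For part~(1), I would exploit that in a commutative von Neumann regular ring every annihilator ideal is generated by an idempotent: from $a=a^{2}x$ the element $e=ax$ is idempotent with $Ra=Re$ and $\ann_{R}(a)=R(1-e)$, and conversely $Rf=\ann_{R}(1-f)$ with $1-f\in Z^{*}(R)$ whenever $f\notin\{0,1\}$; thus the vertices of $IA(R)$ correspond bijectively to the idempotents $e\notin\{0,1\}$, and $Re\cap Rf=Ref$ for idempotents $e,f$. Write $\ann_{R}(x)=Re_{1}$ and $\ann_{R}(y)=Re_{2}$ with $e_{1},e_{2}\notin\{0,1\}$, necessarily distinct since $[x]_\sim\neq[y]_\sim$. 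The standing assumption $Re_{1}\cap Re_{2}=Re_{1}e_{2}=\{0\}$ forces $e_{1}e_{2}=0$, so $e_{3}:=e_{1}+e_{2}$ is again idempotent and $e_{3}\neq0$; moreover $e_{3}\neq1$, since $e_{1}+e_{2}=1$ would give $R=Re_{1}\oplus Re_{2}=\ann_{R}(x)\oplus\ann_{R}(y)$, contradicting the hypothesis (here $x,y$ are distinct elements of $Z^{*}(R)$). Hence $[1-e_{3}]_\sim$ is a vertex, distinct from $[x]_\sim$ and $[y]_\sim$ because $R(e_{1}+e_{2})$ equals neither $Re_{1}$ nor $Re_{2}$; and $\ann_{R}(x)\cap\ann_{R}(1-e_{3})=Re_{1}e_{3}=Re_{1}\neq\{0\}$, and similarly for $y$. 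So $[x]_\sim-[1-e_{3}]_\sim-[y]_\sim$ is the required path.

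The only delicate point, and the one the hypothesis of part~(1) is designed to remove, is ruling out $e_{1}+e_{2}=1$: without it the candidate common neighbour $[1-e_{3}]_\sim$ degenerates to $[0]_\sim$, which is not a vertex — this is exactly the role the analogous direct-sum hypothesis plays in \cite{Malakooti}. The remaining work — the two structural facts recalled above and the bookkeeping of degenerate coincidences among the three vertices on each path — is routine.
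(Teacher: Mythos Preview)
Your proposal is correct and matches the approach the paper indicates: the paper does not spell out the argument but states that the proof follows parts~2 and~3 of Theorem~3.1 in \cite{Malakooti} verbatim after replacing $T(M)^*$ by $Z(R/\sim)^*$ and the vertices of $\Gamma_R(M)$ by those of $IA(R)$, and explicitly remarks that the von Neumann regular case rests on the idempotent-generator description of principal ideals---exactly the machinery you use. Your handling of both cases (the square-zero universal vertex for part~(2), and the idempotent $e_3=e_1+e_2$ producing a common neighbour $[1-e_3]_\sim$ for part~(1), with the direct-sum hypothesis ruling out $e_3=1$) is the intended argument.
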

%\begin{proof}
 % The proof of the theorem in the same analogue of proof of ($2$, $3$) in theorem ($3.1$, \cite{Malakooti}), but consider $T^{*}(M)=Z^{*}(R/\sim)$, the vertices in $IA(R)$ are the same as the vertices in $\Gamma_{R}(M)$, $x,\,y$ are representative of the classes and $\text{Ann}(x)=\ann_{R}(x)$ in the terminology of \cite{Malakooti}.
%\end{proof}

The following theorem shows that the girth of the graph $IA(R)$ belongs to $\{3,\,\infty\}$. In corollary~\ref{grth}, we show that if $IA(R)$ is a connected graph with more than or equal to three vertices, then $IA(R)$ contains a cycle.

\begin{theorem}\label{th2}
  If $IA(R)$ contains a cycle, then $\gr(IA(R))=3$.
\end{theorem}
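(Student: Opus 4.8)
The plan is to run the standard ``girth $3$ or $\infty$'' argument, in the form used for the torsion graph in~\cite{Malakooti}. Since $IA(R)$ contains a cycle, $\gr(IA(R))$ is finite, so it suffices to derive a contradiction from the assumption $\gr(IA(R))\geq 4$; equivalently, assuming that $IA(R)$ has a cycle but no triangle, I want to manufacture a triangle. Fix a cycle $[x_1]_{\sim}-[x_2]_{\sim}-\cdots-[x_n]_{\sim}-[x_1]_{\sim}$ with $n\geq 4$, so the classes $[x_i]_{\sim}$ are pairwise distinct and the representatives $x_i$ can be chosen with pairwise different annihilators. Fix $a\neq 0$ with $ax_1=ax_2=0$ (a witness of the edge $[x_1]_{\sim}-[x_2]_{\sim}$) and $b\neq 0$ with $bx_2=bx_3=0$. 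The whole argument hinges on one remark: as soon as some nonzero element lies in $\ann_{R}(x_1)\cap\ann_{R}(x_3)$, the edge $[x_1]_{\sim}-[x_3]_{\sim}$ appears and, together with the cycle edges $[x_1]_{\sim}-[x_2]_{\sim}$ and $[x_2]_{\sim}-[x_3]_{\sim}$, yields a triangle --- the desired contradiction. (More generally, any ``distance-two chord'' of the cycle produces a triangle.)

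First I would treat the main branch. If $ax_3=0$ then $a\in\ann_{R}(x_1)\cap\ann_{R}(x_3)$ and we are done, so assume $w:=ax_3\neq 0$. Then $x_1w=(ax_1)x_3=0$, so $w\in Z^{*}(R)$ and $[w]_{\sim}$ is a vertex; moreover $x_2w=(ax_2)x_3=0$ and $bw=a(bx_3)=0$, so $b$ is a nonzero element of both $\ann_{R}(w)\cap\ann_{R}(x_2)$ and $\ann_{R}(w)\cap\ann_{R}(x_3)$. Hence $[w]_{\sim}$ is adjacent to $[x_2]_{\sim}$ and to $[x_3]_{\sim}$, and with the cycle edge $[x_2]_{\sim}-[x_3]_{\sim}$ this is a triangle --- \emph{unless} $[w]_{\sim}=[x_2]_{\sim}$ or $[w]_{\sim}=[x_3]_{\sim}$.

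The degenerate collapses $[w]_{\sim}\in\{[x_2]_{\sim},[x_3]_{\sim}\}$ are where the work sits, and I expect them to be the main obstacle. If $[ax_3]_{\sim}=[x_2]_{\sim}$, then from $\ann_{R}(x_3)\subseteq\ann_{R}(ax_3)=\ann_{R}(x_2)$ the nonzero common annihilator of $x_3$ and $x_4$ (witnessing the cycle edge $[x_3]_{\sim}-[x_4]_{\sim}$) also annihilates $x_2$, so $[x_2]_{\sim}-[x_4]_{\sim}$ and hence the triangle $[x_2]_{\sim}-[x_3]_{\sim}-[x_4]_{\sim}-[x_2]_{\sim}$. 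If instead $[ax_3]_{\sim}=[x_3]_{\sim}$, I would rerun the same device on the symmetric element $w':=bx_1$: either $bx_1=0$ (then $b$ gives the chord $[x_1]_{\sim}-[x_3]_{\sim}$), or $w'\in Z^{*}(R)$ is killed by $a$, $x_2$, $x_3$, so $[w']_{\sim}$ is adjacent to $[x_1]_{\sim}$ and $[x_2]_{\sim}$ via $a$ and gives the triangle $[x_1]_{\sim}-[x_2]_{\sim}-[w']_{\sim}-[x_1]_{\sim}$ unless $[w']_{\sim}\in\{[x_1]_{\sim},[x_2]_{\sim}\}$; and the two remaining collapses are settled just as before --- $[w']_{\sim}=[x_2]_{\sim}$ gives $\ann_{R}(x_1)\subseteq\ann_{R}(x_2)$ and hence, via the witness of $[x_n]_{\sim}-[x_1]_{\sim}$, the triangle $[x_1]_{\sim}-[x_2]_{\sim}-[x_n]_{\sim}-[x_1]_{\sim}$, while $[w']_{\sim}=[x_1]_{\sim}$ forces $x_1x_2=0$ which, combined with $x_2x_3=0$ extracted from $[ax_3]_{\sim}=[x_3]_{\sim}$, puts the nonzero element $x_2$ in $\ann_{R}(x_1)\cap\ann_{R}(x_3)$ and again produces the chord. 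Every branch ends in a triangle, contradicting $\gr(IA(R))\geq 4$; hence $\gr(IA(R))=3$. This is precisely the computation of Theorem~$4.1$ of~\cite{Malakooti} for $\Gamma_{R}(M)$, transcribed to $IA(R)$ by replacing $T(M)^{*}$ with $Z(R/\sim)^{*}$ and $Ann$ with $\ann_{R}$, so beyond organizing the collapses there is nothing genuinely new to verify.
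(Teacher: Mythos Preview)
Your proposal is correct and is exactly what the paper does: the paper gives no independent proof of this theorem, but simply states that it is the transcription of Theorem~4.1 of \cite{Malakooti} obtained by replacing $T(M)^*$ by $Z(R/\sim)^*$, $Ann(x)$ by $\ann_{R}(x)$, and the vertices of $\Gamma_{R}(M)$ by those of $IA(R)$. Your detailed case analysis is precisely that transcription, and you even say so yourself in the last sentence.
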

%\begin{proof}
 % The proof of the theorem in the same analogue of proof of theorem ($4.1$, \cite{Malakooti}), but consider $T^{*}(M)=Z^{*}(R/\sim)$, the vertices in $IA(R)$ are the same as the vertices in $\Gamma_{R}(M)$, $x,\,y$ are representative of the classes and $\text{Ann}(x)=\ann_{R}(x)$ in the terminology of \cite{Malakooti}.
%\end{proof}

\begin{corollary}\label{grth}
  If $IA(R)$ is a connected graph with $\mid Z(R/\sim)^*\mid >2$, then $IA(R)$ contains a cycle and $\gr(IA(R))=3$.
\end{corollary}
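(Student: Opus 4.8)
The plan is to combine Theorem~\ref{th2} with a construction that forces a triangle. By Theorem~\ref{th2} it suffices to prove that a connected $IA(R)$ with $|Z(R/\sim)^*|>2$ contains a cycle. Since a connected graph on at least three vertices contains a path of length two, I would start from such a path $[x]_{\sim}-[y]_{\sim}-[z]_{\sim}$, fix representatives $x,y,z$, and pick nonzero $r\in\ann_{R}(x)\cap\ann_{R}(y)$ and $s\in\ann_{R}(y)\cap\ann_{R}(z)$.

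First I would dispose of the easy case: if $\ann_{R}(x)\cap\ann_{R}(z)\neq\{0\}$ then $[x]_{\sim}-[y]_{\sim}-[z]_{\sim}-[x]_{\sim}$ is a $3$-cycle. So assume $\ann_{R}(x)\cap\ann_{R}(z)=\{0\}$. The key observation is that $rs$ annihilates each of $x,y,z$, hence $rs\in\ann_{R}(x)\cap\ann_{R}(z)=\{0\}$, so $rs=0$; moreover $rz\neq0$ and $sx\neq0$ (otherwise $r$, resp.\ $s$, would lie in $\ann_{R}(x)\cap\ann_{R}(z)$), and $[r]_{\sim}\neq[s]_{\sim}$ (otherwise $z\in\ann_{R}(s)=\ann_{R}(r)$, forcing $rz=0$). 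Now $y$ is a nonzero common annihilator of $r$ and $s$, $s$ a nonzero common annihilator of $r$ and $y$, and $r$ a nonzero common annihilator of $s$ and $y$; so whenever $[r]_{\sim},[s]_{\sim},[y]_{\sim}$ are pairwise distinct they span a triangle.

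The main obstacle --- and what the uncompressed argument of \cite{Malakooti} does not have to face --- is the possibility that compression identifies $[r]_{\sim}=[y]_{\sim}$ (the case $[s]_{\sim}=[y]_{\sim}$ being symmetric; both cannot hold as $[r]_{\sim}\neq[s]_{\sim}$). Here I would extract the nilpotent hidden in the collapse: $\ann_{R}(r)=\ann_{R}(y)$ together with $ry=0$ gives $y\in\ann_{R}(y)$, so $y^{2}=0$, and $rx=0$ gives $xy=0$. Then $y$ annihilates everything in $\ann_{R}(y)$, so $\{[v]_{\sim}:0\neq v\in\ann_{R}(y)\}$ is a clique containing the distinct vertices $[x]_{\sim}$ and $[y]_{\sim}$, and it remains to find a third vertex in it. If $[x]_{\sim}\neq[s]_{\sim}$ take $[s]_{\sim}$. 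If $[x]_{\sim}=[s]_{\sim}$, then $z\in\ann_{R}(s)=\ann_{R}(x)$, so $xz=0$, while $yz\neq0$ (else $y\in\ann_{R}(x)\cap\ann_{R}(z)=\{0\}$); then $x-y$ is a nonzero element of $\ann_{R}(y)$ with $[x-y]_{\sim}\notin\{[x]_{\sim},[y]_{\sim}\}$ --- indeed $[x-y]_{\sim}=[y]_{\sim}$ would force $s(x-y)=sx=0$ and $[x-y]_{\sim}=[x]_{\sim}$ would force $z(x-y)=-zy=0$, both impossible. In all cases $IA(R)$ has a $3$-cycle, and Theorem~\ref{th2} upgrades this to $\gr(IA(R))=3$.

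I expect essentially all the work to be in bookkeeping this last coincidence case correctly --- in particular noticing that $y^{2}=0$ is forced, and that the naive third vertex $[s]_{\sim}$ can itself collapse onto $[x]_{\sim}$, so that the element $x-y$ (rather than $s$) is needed to complete the clique. The reduction and the first two cases are routine.
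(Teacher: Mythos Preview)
Your argument is correct. The paper itself does not spell out a proof here: it simply asserts that the argument of Corollary~4.2 in \cite{Malakooti} carries over verbatim after replacing $T(M)^*$ by $Z(R/\sim)^*$ and vertices of $\Gamma_R(M)$ by vertices of $IA(R)$. Your write-up does that work explicitly, and you are right to isolate the one place where the passage to equivalence classes is not automatic: the auxiliary elements $r,s$ may satisfy $[r]_\sim=[y]_\sim$ or $[s]_\sim=[y]_\sim$ after compression, a coincidence that simply cannot occur between distinct elements in the uncompressed graph. Your resolution of this collapse --- extracting $y^2=0$ from $\ann_R(r)=\ann_R(y)$ and $ry=0$, observing that $\{[v]_\sim:0\neq v\in\ann_R(y)\}$ is a clique, and in the further degenerate sub-case $[s]_\sim=[x]_\sim$ producing $x-y$ as the third clique vertex --- is correct and carefully checked. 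One small redundancy: once you have exhibited an explicit $3$-cycle, $\gr(IA(R))=3$ is immediate (no simple graph has girth less than $3$), so the closing appeal to Theorem~\ref{th2} is not needed.
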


The following theorem shows that the graph $IA(R)$ with at least three vertices is connected, its diameter is less than or equal to three and from the previous corollary, it follows its girth is equal to three.

\begin{theorem}\label{diam}
  Let $\mid Z(R/\sim)^*\mid >2$. Then $IA(R)$ is connected and $\dm(IA(R))\leq3$.
\end{theorem}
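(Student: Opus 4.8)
The plan is to prove connectedness and the diameter bound directly, by taking two arbitrary distinct vertices $[x]_\sim$ and $[y]_\sim$ in $IA(R)$ with $|Z(R/\sim)^*| > 2$ and producing a path of length at most three between them. If $\ann_R(x)\cap\ann_R(y)\neq\{0\}$ we are done, so assume $\ann_R(x)\cap\ann_R(y)=\{0\}$. Pick $0\neq a\in\ann_R(x)$ and $0\neq b\in\ann_R(y)$; since the intersection is trivial, $a\neq b$, and in fact $ab\in\ann_R(x)\cap\ann_R(y)=\{0\}$, so $ab=0$. The element $a$ is a nonzero zero-divisor (it kills $x\neq 0$), so $[a]_\sim\in Z(R/\sim)^*$, and similarly $[b]_\sim\in Z(R/\sim)^*$. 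Moreover $a\in\ann_R(x)$ gives an edge $[x]_\sim - [a]_\sim$ whenever $[x]_\sim\neq[a]_\sim$, and $b\in\ann_R(y)$ gives $[y]_\sim - [b]_\sim$ whenever $[y]_\sim\neq[b]_\sim$; and $ab = 0$ with $a\in\ann_R(b)\cap\ann_R(a)$... more carefully, $a\in\ann_R(b)$ and $b\in\ann_R(a)$ so $\ann_R(a)\cap\ann_R(b)\ni$ (need a common nonzero annihilator), so I would instead use that $b\in\ann_R(a)$ is itself a witness only if $b$ also annihilates $a$, which it does, giving the edge $[a]_\sim - [b]_\sim$ when $[a]_\sim\neq[b]_\sim$. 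Chaining these yields the path $[x]_\sim - [a]_\sim - [b]_\sim - [y]_\sim$ of length $\leq 3$.

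The remaining work is to handle the degenerate cases where some of these four classes coincide. If $[x]_\sim = [a]_\sim$, then $a\in\ann_R(a)=\ann_R(x)$, so $a^2 = 0$; replacing $a$ by a different element of $\ann_R(x)$ in a different class if one exists, or else arguing that $R$ is small, I would show the path shortens. Similarly for $[y]_\sim = [b]_\sim$, and for $[a]_\sim = [b]_\sim$ (which forces $\ann_R(a)=\ann_R(b)$, hence $b\in\ann_R(a)$ and $a\in\ann_R(b)$ consistently, and one checks $[x]_\sim - [a]_\sim = [b]_\sim - [y]_\sim$ is a path of length $2$). The hypothesis $|Z(R/\sim)^*|>2$ is exactly what is needed to rule out the pathological small configurations: when only one or two zero-divisor classes are available the construction can collapse, but with at least three classes there is always enough room to route a path. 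I would enumerate the coincidence patterns among $\{[x]_\sim,[y]_\sim,[a]_\sim,[b]_\sim\}$ and in each case either exhibit a short path directly or use the third available vertex as an intermediate hop.

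The main obstacle I anticipate is the bookkeeping around these coincidences — in particular the case where $\ann_R(x)=\{0,a\}$ or more generally where every nonzero annihilator of $x$ lies in $[x]_\sim$ itself (so $a^2=0$), and simultaneously the same holds on the $y$ side, while still $[a]_\sim\neq[b]_\sim$. Here I would still have $ab=0$, $a\in\ann_R(b)$, $b\in\ann_R(a)$, so $[a]_\sim - [b]_\sim$ is an edge, and $[x]_\sim=[a]_\sim$, $[y]_\sim=[b]_\sim$ would make $[x]_\sim - [y]_\sim$ itself an edge, contradicting $\ann_R(x)\cap\ann_R(y)=\{0\}$ only if $a$ fails to annihilate $y$ — which it does fail, consistently — so actually in this sub-case $[x]_\sim - [y]_\sim$ need not be an edge and the length-$1$ path $[x]_\sim = [a]_\sim - [b]_\sim = [y]_\sim$ works. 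The genuinely annoying configuration is when exactly one side degenerates; there the third vertex guaranteed by $|Z(R/\sim)^*|>2$ must be invoked, and I would check that it is adjacent to the appropriate endpoint via a Corollary~\ref{grth}-style argument or a direct annihilator computation. Once these finitely many cases are dispatched, connectedness and $\dm(IA(R))\leq 3$ follow immediately, and Corollary~\ref{grth} then gives $\gr(IA(R))=3$.
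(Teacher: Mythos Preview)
Your proposal has a genuine gap: you are conflating the zero-divisor adjacency $xa=0$ with the $IA(R)$ adjacency $\ann_R(x)\cap\ann_R(a)\neq\{0\}$. The claim ``$a\in\ann_R(x)$ gives an edge $[x]_\sim-[a]_\sim$'' is false in general; knowing $ax=0$ tells you $a\in\ann_R(x)$ and $x\in\ann_R(a)$, but it does \emph{not} produce a common nonzero element of $\ann_R(x)\cap\ann_R(a)$. You half-notice this when you write ``(need a common nonzero annihilator)'' for the middle edge, but then fall back into the same error (``$b\in\ann_R(a)$ is itself a witness only if $b$ also annihilates $a$, which it does'' --- for $b$ to witness $\ann_R(a)\cap\ann_R(b)\neq\{0\}$ you would also need $b^2=0$, which is not given). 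A concrete failure: in $R=\mathbb{Z}_2\times\mathbb{Z}_2\times\mathbb{Z}_2$ take $x=(1,1,0)$, $y=(0,1,1)$, $a=(0,0,1)\in\ann_R(x)$, $b=(1,0,0)\in\ann_R(y)$. Then $ab=0$ as you say, but $\ann_R(x)\cap\ann_R(a)=\{(0,0,c):c\in\mathbb{Z}_2\}\cap\{(c_1,c_2,0)\}=\{0\}$, so $[x]_\sim-[a]_\sim$ is \emph{not} an edge, and your path $[x]_\sim-[a]_\sim-[b]_\sim-[y]_\sim$ never gets started.

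The paper's proof avoids this by bringing in the third vertex $[z]_\sim$ from the outset (not just in degenerate cases) and building the intermediate vertices as \emph{products}, e.g.\ $[tx]_\sim$ and $[ty]_\sim$ where $t$ annihilates $z$. The point is that $\ann_R(x)\subseteq\ann_R(tx)$, so $[x]_\sim-[tx]_\sim$ is automatically an edge, and the element $z$ itself lies in $\ann_R(tx)\cap\ann_R(ty)$, giving the middle edge. Your elements $a,b$ do not have this multiplicative structure relative to $x$ and $y$, which is why the outer edges fail. To repair your argument you would essentially have to reproduce this product construction, at which point you are doing the paper's proof.
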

\begin{proof}
  Let $[x]_\sim\in Z(R/\sim)^*$ and $[y]_{\sim}\in Z(R/\sim)^*$ be two non-equal nonadjacent vertices. Let $[z]_\sim\in Z(R/\sim)^*$ such that $[x]_\sim\neq[z]_\sim\neq [y]_\sim$. If $[z]_\sim$ is adjacent to both $[x]_\sim$ and $[y]_\sim$, then $d(x,y)=2$. If $[z]_\sim$ is not adjacent to $[y]_\sim$ and it is adjacent to $[x]_\sim$, we have two subcases. The first subcase if there is $r\in\ann(x)$ and $r\notin\ann(z)$ (since $[x]_\sim\neq[z]_\sim$), then there is $t\in Z^*(R)$ such that $tz=0$, $tx=0$ and $ty\neq0$. Since $ry\neq0$, hence ${[x]_\sim}-{[rz]_\sim}-{[ry]_\sim}-{[y]_\sim}$ is a path between $[x]_\sim$ and $[y]_\sim$. Then $d(x,y)\leq3$. Similarly, the second subcase if there is $r\in\ann(z)$ and $r\notin\ann(x)$ (since $[x]_\sim\neq[z]_\sim$), then $d(x,y)\leq3$. Similarly to the previous case, if $[z]_\sim$ is adjacent to $[y]_\sim$ and it is not adjacent to $[x]_\sim$, hence $d(x,y)\leq3$. If $[z]_\sim$ is neither adjacent to $[x]_\sim$ nor $[y]_\sim$, then there is $t\in Z^*(R)$ such that $tz=0$, $tx\neq0$ and $ty\neq0$. So ${[x]_\sim}-{[tx]_\sim}-{[ty]_\sim}-{[y]_\sim}$ is a path between $[x]_\sim$ and $[y]_\sim$. Thus $d(x,y)\leq3$. Therefore, $IA(R)$ is connected and $\dm(IA(R))\leq3$.
\end{proof}

\begin{remark}\label{card 2}\leavevmode
   \begin{itemize}
     \item {If $\mid Z(R/\sim)^*\mid=2$, then we have two cases. First one, if $Z(R)$ is not an ideal, then from theorem~\ref{ideal}, $IA(R)$ is not a complete graph which means that $IA(R)$ is totally disconnected graph. Second one, if $Z(R)$ is an ideal, suppose that $Z(R/\sim)^*=\{[a]_\sim,\,[b]_\sim\}$. Then $a+b\in Z(R)$. If $[a+b]_\sim=[0]_\sim$, then $a+b=0$. Then $a=-b$. Whence $\ann_R(a)=\ann_R(b)$ and this is a contradiction. Therefore $[a+b]_\sim=[a]_\sim$ or $[a+b]_\sim=[b]_\sim$. If $[a+b]_\sim=[a]_\sim$, then for all $0\neq r\in\ann_R(a)$, $r\in\ann_R(a+b)$. Which means that $0=ra=r(a+b)=ra+rb=rb$. Thus $r\in \ann_R(b)$. Similarly, if $[a+b]_\sim=[b]_\sim$, then $IA(R)$ is complete.}
     \item {If $\mid Z(R/\sim)^*\mid=3$, then $IA(R)$ is connected. Hence $\gr(IA(R))=3$. It follows that $IA(R)$ is complete and $\dm(IA(R))=1$.}
   \end{itemize}
\end{remark}

The following theorem shows that $\Gamma_{R}(R)$ is complete if and only if $IA(R)$ is complete. Theorem~\ref{conn} shows that $\Gamma_{R}(R)$ is connected if and only if $IA(R)$ is connected. Moreover, they have the same diameters.

\begin{theorem}
  $\Gamma_{R}(R)$ is complete if and only if $IA(R)$ is complete.
\end{theorem}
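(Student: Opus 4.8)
The plan is to read off the result from the fact that $IA(R)$ is, by construction, the ``compression'' of $\Gamma_{R}(R)$: there is a natural surjection $\pi\colon V(\Gamma_{R}(R))=Z^{*}(R)\to V(IA(R))=Z(R/\sim)^{*}$, $x\mapsto[x]_{\sim}$, whose fibres are exactly the $\sim$-classes. Two structural observations drive everything. First, every element of a given fibre has one and the same annihilator ideal $\ann_{R}(x)$, which is \emph{non-zero} since $x\in Z^{*}(R)$; hence any two distinct elements lying in the same fibre are automatically adjacent in $\Gamma_{R}(R)$, with no hypothesis on $R$. Second, if $[x]_{\sim}\neq[y]_{\sim}$ then the condition $\ann_{R}(x)\cap\ann_{R}(y)\neq\{0\}$ that defines adjacency in $IA(R)$ is literally the same condition that defines adjacency of $x,y$ in $\Gamma_{R}(R)$; and this is independent of the chosen representatives because $\ann_{R}$ is constant on $\sim$-classes (this is precisely the well-definedness remark preceding the definition of $IA(R)$). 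From these, completeness of either graph should transfer to the other.

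For the forward implication, I would take two distinct vertices $[x]_{\sim},[y]_{\sim}$ of $IA(R)$ and fix representatives $x,y\in Z^{*}(R)$. Since $[x]_{\sim}\neq[y]_{\sim}$ forces $\ann_{R}(x)\neq\ann_{R}(y)$, and equal elements would force equal annihilators, we get $x\neq y$, so $x$ and $y$ are distinct vertices of $\Gamma_{R}(R)$. Completeness of $\Gamma_{R}(R)$ then gives $\ann_{R}(x)\cap\ann_{R}(y)\neq\{0\}$, which is exactly what is needed for $[x]_{\sim}-[y]_{\sim}$ in $IA(R)$. Hence $IA(R)$ is complete.

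For the converse, I would take two distinct vertices $x,y\in Z^{*}(R)$ of $\Gamma_{R}(R)$ and split into cases. If $\ann_{R}(x)=\ann_{R}(y)$, then $\ann_{R}(x)\cap\ann_{R}(y)=\ann_{R}(x)\neq\{0\}$, so $x-y$ directly. If $\ann_{R}(x)\neq\ann_{R}(y)$, then $[x]_{\sim}$ and $[y]_{\sim}$ are distinct vertices of $IA(R)$, adjacent by completeness, and since annihilators are constant on classes this adjacency means $\ann_{R}(x)\cap\ann_{R}(y)\neq\{0\}$, i.e. $x-y$ again. In both cases $x$ and $y$ are adjacent, so $\Gamma_{R}(R)$ is complete. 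I do not anticipate a genuine obstacle: the only points that require a little care are the bookkeeping with representatives just described and the degenerate cases where one of the graphs has at most one vertex (it is then complete by convention, and these cases are already absorbed by the case analysis, since the ``same fibre'' argument needs no assumption). So the whole proof is essentially immediate from how $IA(R)$ is built out of $\Gamma_{R}(R)$.
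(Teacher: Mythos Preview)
Your proposal is correct and follows essentially the same approach as the paper: the forward direction picks representatives of two distinct classes and applies completeness of $\Gamma_{R}(R)$, and the converse splits into the cases $\ann_{R}(x)=\ann_{R}(y)$ and $\ann_{R}(x)\neq\ann_{R}(y)$ exactly as the paper does. Your extra framing via the surjection $\pi$ and the explicit remark that $[x]_{\sim}\neq[y]_{\sim}$ forces $x\neq y$ are harmless elaborations of the same argument.
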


\begin{proof}
   Assume that $\Gamma_{R}(R)$ is complete. Let $[x]_{\sim},\, [y]_{\sim}\in Z(R/\sim)^{*}$ be two distinct vertices. Then $x,\,y\in Z^{*}(R)=T(R)^{*}$ for any representative $x,\,y$. Hence there is $r\in \ann_{R}(x)\cap \ann_{R}(y)$. Therefore $IA(R)$ is complete.\\
  For the converse, let $x,\,y\in T(R)^*=Z^*(R)$. If $\ann_{R}(x)=\ann_{R}(y)$, then $x-y$. If $\ann_{R}(x)\neq \ann_{R}(y)$, then $[x]_{\sim}$ and $[y]_{\sim}$ are two distinct vertices in $IA(R)$. Since $IA(R)$ is complete, then there is $r\in Z^{*}(R)$ such that $r\in \ann_{R}(x)\cap \ann_{R}(y)$. So $x-y$. Therefore $\Gamma_{R}(R)$ is complete.
\end{proof}

\begin{theorem}\label{conn}
  $\Gamma_{R}(R)$ is connected if and only if $IA(R)$ is connected. Moreover, if $|Z(R/\sim)^*|>1$, then $\dm(\Gamma_{R}(R))=\dm(IA(R))$.
\end{theorem}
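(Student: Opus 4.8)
The plan is to use that $IA(R)$ is precisely the graph obtained from $\Gamma_{R}(R)$ by contracting each $\sim$-class of $Z^{*}(R)$ to a single vertex, so that everything reduces to projecting paths downward and lifting paths upward. First I would record three elementary observations, all immediate from the definitions and from the well-definedness of the adjacency of $IA(R)$ noted at the start of Section~2. (i) If $x,y\in Z^{*}(R)$, $x\neq y$ and $\ann_R(x)=\ann_R(y)$, then $x-y$ in $\Gamma_{R}(R)$, since $\ann_R(x)\cap\ann_R(y)=\ann_R(x)\neq\{0\}$. (ii) If $x,y\in Z^{*}(R)$ and $[x]_{\sim}\neq[y]_{\sim}$, then $x-y$ in $\Gamma_{R}(R)$ if and only if $[x]_{\sim}-[y]_{\sim}$ in $IA(R)$; this holds because adjacency in $IA(R)$ does not depend on the chosen representatives. (iii) On any path in $IA(R)$ the vertices are distinct classes, hence any family of representatives chosen for them is pairwise distinct in $\Gamma_{R}(R)$, since distinct classes have distinct, non-zero annihilators.

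For the equivalence of connectivity, suppose first that $\Gamma_{R}(R)$ is connected and let $[x]_{\sim},[y]_{\sim}\in Z(R/\sim)^{*}$; fixing representatives $x,y$, a path from $x$ to $y$ in $\Gamma_{R}(R)$ maps under $u\mapsto[u]_{\sim}$ to a walk whose consecutive terms are either equal or adjacent in $IA(R)$ by (ii), so after deleting repetitions we obtain a walk, hence a path, from $[x]_{\sim}$ to $[y]_{\sim}$ in $IA(R)$. Conversely, suppose $IA(R)$ is connected and let $x,y\in Z^{*}(R)$. If $\ann_R(x)=\ann_R(y)$ then either $x=y$ or $x-y$ by (i). If $\ann_R(x)\neq\ann_R(y)$, take a path $[x]_{\sim}=[z_0]_{\sim}-\cdots-[z_k]_{\sim}=[y]_{\sim}$ in $IA(R)$, choose $x$ as representative of $[z_0]_{\sim}$, $y$ as representative of $[z_k]_{\sim}$ and arbitrary representatives $z_1,\dots,z_{k-1}$ for the rest; by (ii) and (iii) this is a genuine path $x=z_0-\cdots-z_k=y$ in $\Gamma_{R}(R)$.

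For the diameters, assume $|Z(R/\sim)^{*}|>1$. By the equivalence just proved the two graphs are simultaneously connected or disconnected; in the disconnected case both diameters are $\infty$. In the connected case I would prove two inequalities. For $\dm(IA(R))\leq\dm(\Gamma_{R}(R))$: for distinct classes $[x]_{\sim},[y]_{\sim}$ and arbitrary representatives, projecting a shortest $\Gamma_{R}(R)$-path and deleting repetitions gives $d_{IA(R)}([x]_{\sim},[y]_{\sim})\leq d_{\Gamma_{R}(R)}(x,y)$, and taking suprema yields the inequality. For $\dm(\Gamma_{R}(R))\leq\dm(IA(R))$: given distinct $x,y\in Z^{*}(R)$, if $\ann_R(x)=\ann_R(y)$ then $d_{\Gamma_{R}(R)}(x,y)=1$ by (i), and $1\leq\dm(IA(R))$ since, by hypothesis, $IA(R)$ is a connected graph with more than one vertex; if $\ann_R(x)\neq\ann_R(y)$, the lifting construction above applied to a shortest path produces a path from $x$ to $y$ of length $d_{IA(R)}([x]_{\sim},[y]_{\sim})\leq\dm(IA(R))$. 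Taking the supremum over $x,y$ gives $\dm(\Gamma_{R}(R))\leq\dm(IA(R))$, and the two inequalities give equality.

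The whole argument is bookkeeping about paths; the only point demanding care is the lifting step, where the two endpoint classes must be represented by the prescribed elements $x$ and $y$ while the freely chosen interior representatives must still form an honest path — this is exactly what (i), (ii) and (iii) guarantee. Note also that the hypothesis $|Z(R/\sim)^{*}|>1$ cannot be dropped: it is what excludes the degenerate case in which $IA(R)$ is a single vertex of diameter $0$ while $\Gamma_{R}(R)$ is a non-trivial complete graph of diameter $1$.
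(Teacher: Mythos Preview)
Your proof is correct and follows essentially the same approach as the paper: both directions are obtained by projecting paths from $\Gamma_{R}(R)$ to $IA(R)$ (collapsing repeated classes) and lifting paths from $IA(R)$ back to $\Gamma_{R}(R)$ via chosen representatives, with the case $\ann_R(x)=\ann_R(y)$ handled separately by direct adjacency. Your write-up is slightly more systematic in isolating the observations (i)--(iii) and in treating the disconnected case for the diameter explicitly, but the underlying argument is the same as the paper's.
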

\begin{proof}
  Assume that $\Gamma_{R}(R)$ is connected. Let $[x]_{\sim},\, [y]_{\sim}\in Z(R/\sim)^*$ be two distinct vertices in $IA(R)$. Then there is a path between $x$ and $y$. Let ${{r_0}={x}}-{r_1}-....-{r_{n-1}}-{{y}={r_n}}$ be the shortest path of length $n$ between $x$ and $y$, i.e.~$d(x,y)=n$. If $\ann_{R}(r_i)=\ann_{R}(r_{i+1})$ for some $i\geq1$, and since $\ann_{R}(r_{i-1})\cap \ann_{R}(r_i)\neq \{0\}$, then $r_{i-1}-r_{i+1}$. We can collapse the path into {${{r_0}={x}}-{r_1}-...-{r_{i-1}}-{r_{i+1}}-...-{r_{n-1}}-{{y}={r_n}}$}, i.e.~$d(x,y)<n$, which is a contradiction. Therefore $\ann_{R}(r_i)\neq \ann_{R}(r_{i+1})$ for all $0\leq i\leq {n-1}$. Thus $[r_i]_{\sim}\neq [r_{i+1}]_{\sim}$ and $\ann_{R}(r_i)\cap \ann_{R}(r_{i+1})\neq\{0\}$ for all $0\leq i\leq {n-1}$. It follows that {${[r_0]_{\sim}}={[x]_{\sim}}-{[r_1]_{\sim}}-....-{[r_{n-1}]_{\sim}}-{{[y]_{\sim}}={[r_n]_{\sim}}}$} is a path in $IA(R)$ between $[x]_{\sim}$ and $[y]_{\sim}$. Hence $IA(R)$ is connected and $d([x]_{\sim},[y]_{\sim})\leq n=d(x,y)$. So $\dm(IA(R))\leq\dm(\Gamma_{R}(R))$.\\
  For the converse, assume that $IA(R)$ is connected. Let $x,\,y\in T(R)^*=Z^{*}(R)$. If $\ann_{R}(x)=\ann_{R}(y)$ i.e.~$[x]_{\sim}=[y]_{\sim}$, then $x\,-\,y$ in the graph $\Gamma_{R}(R)$ which means that $d(x,y)=1$. If $\ann_{R}(x)\neq \ann_{R}(y)$, i.e.~$[x]_{\sim}\neq [y]_{\sim}$, then there is a path between $[x]_{\sim}$ and $[y]_{\sim}$. Let ${{[s_0]_{\sim}}={[x]_{\sim}}}-{[s_1]_{\sim}}-....-{[s_{m-1}]_{\sim}}-{{[y]_{\sim}}={[s_m]_{\sim}}}$ be the shortest path of length $m$ between $[x]_{\sim}$ and $[y]_{\sim}$, i.e.~$d([x]_{\sim}, [y]_{\sim})=m$. Thus $\ann_{R}(s_i)\neq \ann_{R}(s_{i+1})$ and $\ann_{R}(s_i)\cap \ann_{R}(s_{i+1})\neq \{0\}$ for all $0\leq i\leq{m-1}$. Therefore ${{s_0}={x}}-{s_1}-...-{s_{m-1}}-{{y}={s_m}}$ is a path in $\Gamma_{R}(R)$ between $x$ and $y$. Then $\Gamma_{R}(R)$ is connected and $d(x,y)\leq m=d([x]_{\sim},[y]_{\sim})$. It follows that $\dm(\Gamma_{R}(R))\leq\dm(IA(R))$.\\
  From the two parts of the proof, it follows that if $|Z(R/\sim)^*|>1$, then $\dm(IA(R))=\dm(\Gamma_{R}(R))$.
\end{proof}

\end{Section}

%%%%%%%%%%%%%%%%%%%%%%%%%%% Section 3 Compressed Intersection Annihilator Graph of $Z_n$ %%%%%%%%%%%%%%%%%%%%%%%%%%%%%%%%%%%%%%

\begin{Section}{Compressed Intersection Annihilator Graph of $\mathbb{Z}_n$}
\noindent
 Throughout this section we assume that $R=\mathbb{Z}_n$ for some integer $n>1$. The following two lemmas identify which elements in $\mathbb{Z}_n$ are zero divisors and when they have the same annihilators.

\begin{lemma}\label{zeros}
  Let $k$ and $n$ be integers with $1<k<n$. Then $k\in Z^*(\mathbb{Z}_n)$ if and only if $g.c.d(k,n)\neq 1$.
\end{lemma}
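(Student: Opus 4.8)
The statement is the standard description of the zero-divisors of $\mathbb{Z}_n$, so the plan is simply to prove the two implications separately: in one direction I would exhibit an explicit non-zero annihilating element, and in the other I would invoke Bézout's identity to see that the element is a unit.

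First I would treat the case $g.c.d(k,n)=d\neq 1$. Here $d\geq 2$ and $d\mid n$, so set $m=n/d$. The key observation is that $d\geq 2$ forces $1\leq m<n$, so the class of $m$ is a \emph{non-zero} element of $\mathbb{Z}_n$; writing $k=dk'$ for an integer $k'$ one computes $km=k'(dm)=k'n\equiv 0\pmod n$, hence $k\cdot m=0$ in $\mathbb{Z}_n$ with $m\neq 0$. Since $1<k<n$ the class of $k$ is also non-zero, and therefore $k\in Z^*(\mathbb{Z}_n)$.

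Conversely, if $g.c.d(k,n)=1$ I would use Bézout to obtain integers $a,b$ with $ak+bn=1$, so that $ak\equiv 1\pmod n$ and $k$ is a unit of $\mathbb{Z}_n$. A unit is never a zero-divisor: if $km=0$ in $\mathbb{Z}_n$ then $m=(ak)m=a(km)=0$. Hence $k\notin Z(\mathbb{Z}_n)$, and a fortiori $k\notin Z^*(\mathbb{Z}_n)$. One could equally argue that $\mathbb{Z}_n$ is finite, so each element is either a unit or a zero-divisor, which yields the contrapositive at once.

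There is no real obstacle in this lemma; the only point deserving attention is verifying that the proposed witness $m=n/g.c.d(k,n)$ really is non-zero modulo $n$, which is precisely where the hypothesis $g.c.d(k,n)\neq 1$ is used — otherwise $d=1$ gives $m=n\equiv 0$, and $k$ would merely annihilate $0$ rather than being a genuine zero-divisor.
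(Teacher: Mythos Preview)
Your proof is correct and essentially mirrors the paper's argument: for $\gcd(k,n)=d\neq 1$ both you and the paper exhibit $n/d$ as a non-zero annihilator of $k$, and for the other implication the paper argues directly that $\gcd(k,n)=1$ together with $n\mid kl$ forces $n\mid l$, while you phrase the same coprimality fact via B\'ezout to show $k$ is a unit. The logical content is identical; only the packaging of the ``coprime implies cancellable'' step differs.
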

\begin{proof}
  Let $k\in Z^{*}(\mathbb{Z}_n)$. Then there is $l\in Z^{*}(\mathbb{Z}_n)$, $0\neq l<n$ such that $kl=0\, (\text{mod}\,n)$. Hence for some positive integer $m$, $kl=mn$. If $g.c.d(k,n)=1$, so $k/m$ and also $n/l$ and this is a contradiction with $l<n$.\\
  For the converse, assume that $g.c.d(k,n)=r\neq1$. Then there are integers $l\neq0$, $m\neq 0$ such that $n=rl$ and $k=rm$. Hence $kl=rml=nm$, so $kl=0 \,(\text{mod}\,n)$. Therefore $k\in Z^{*}(\mathbb{Z}_n)$.
\end{proof}

\begin{lemma}\label{ann}
Let $k\in \mathbb{Z}_n$ and $k\neq0$. If $g.c.d(k,n)=l$ with $l\neq1$, then $\ann_{R}(k)=\ann_{R}(l)$.
\end{lemma}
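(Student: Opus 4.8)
The plan is to establish the two inclusions $\ann_{R}(l)\subseteq\ann_{R}(k)$ and $\ann_{R}(k)\subseteq\ann_{R}(l)$ separately, working entirely from the description $\ann_{R}(a)=\{r\in\mathbb{Z}_n\mid n\text{ divides }ra\}$. First I would write $l=g.c.d(k,n)$, so that $l\mid k$ and $l\mid n$, set $k=lk'$ and $n=ln'$, and note that the definition of the greatest common divisor forces $g.c.d(k',n')=1$. I would also observe at the outset that $l\neq1$ together with Lemma~\ref{zeros} shows $l\in Z^{*}(\mathbb{Z}_n)$, so that $[l]_{\sim}$ is genuinely a vertex of $IA(R)$ and the statement is not vacuous.

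For the first inclusion: if $r\in\ann_{R}(l)$, i.e.\ $n\mid rl$, then $n\mid rlk'=rk$, so $r\in\ann_{R}(k)$. This direction uses nothing beyond $l\mid k$.

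For the reverse inclusion, suppose $r\in\ann_{R}(k)$, i.e.\ $n\mid rk$. Substituting $k=lk'$ and $n=ln'$ and cancelling the common factor $l$ gives $n'\mid rk'$; since $g.c.d(k',n')=1$, Euclid's lemma yields $n'\mid r$, whence $n=ln'\mid rl$, that is, $r\in\ann_{R}(l)$. Equivalently one could note once and for all that $\ann_{\mathbb{Z}_n}(a)$ is the cyclic subgroup generated by $n/g.c.d(a,n)$, and since $g.c.d(l,n)=l$ because $l\mid n$, both $\ann_{R}(k)$ and $\ann_{R}(l)$ equal that same subgroup generated by $n/l$.

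I do not expect a serious obstacle: the only step requiring any care is the cancellation of $l$ in $n\mid rk$, which is precisely where the coprimality $g.c.d(k/l,n/l)=1$ enters; everything else is routine divisibility bookkeeping.
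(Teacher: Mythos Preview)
Your proposal is correct and follows essentially the same approach as the paper: write $k=lk'$, $n=ln'$ with $\gcd(k',n')=1$, observe $\ann_R(l)\subseteq\ann_R(k)$ immediately from $l\mid k$, and for the reverse inclusion cancel $l$ and apply coprimality (Euclid's lemma) to conclude $n'\mid r$ and hence $n\mid rl$. The only differences are notational (the paper writes $k=rl$, $n=sl$ and takes an arbitrary $h\in\ann_R(k)$), together with your extra remark about the cyclic-subgroup description of $\ann_{\mathbb{Z}_n}(a)$, which is a valid alternative shortcut the paper does not use.
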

\begin{proof}
  Let $g.c.d(k,n)=l$, $l\neq1$. Then there exist non-zero $r,\,s\in\mathds{N}$ such that $k=rl$, $n=sl$ with $g.c.d(r,s)=1$. It is clear that, $\ann_{R}(l)\subset \ann_{R}(k)$. Now, we want to show that $\ann_{R}(k)\subset \ann_{R}(l)$. Let $h\in \ann_{R}(k)$, $h\neq0$. Then $hk=qn$ for non-zero integer $q\in\mathds{N}$. By substitution, $hrl=qsl$. By cancellation of $l\neq0$, we have $hr=qs$ but $g.c.d(r,s)=1$. Therefore $s$ must divide $h$, then $sl$ must divide $hl$, which means that $hl$ is a multiple of $n$. Thus $hl=0\,(\text{mod}\,n)$ and therefore $h\in \ann_{R}(l)$, and $\ann_{R}(k)\subset \ann_{R}(l)$. Thus $\ann_{R}(k)=\ann_{R}(l)$.
\end{proof}

\begin{corollary}\label{vertices}
  Let $[k]_{\sim}\in{Z({\mathbb{Z}_n}/\sim)^*}$. Then for some representative $l$, $[k]_{\sim}=[l]_{\sim}$ and $l/n$.
\end{corollary}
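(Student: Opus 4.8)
The plan is to read this off directly from Lemmas~\ref{zeros} and~\ref{ann}, since the corollary is essentially a repackaging of those two facts. Given a vertex $[k]_{\sim}\in Z(\mathbb{Z}_n/\sim)^*$, I would first normalize the representative: choose $k$ with $0\le k<n$, and observe that $k\notin\{0,1\}$ (the class $[0]_\sim$ is excluded, and $1$ is a unit, not a zero-divisor), so $2\le k\le n-1$. Then set $l=\gcd(k,n)$.

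Next I would check that $l$ is a bona fide vertex of $IA(\mathbb{Z}_n)$, not the zero class. Since $k\in Z^*(\mathbb{Z}_n)$, Lemma~\ref{zeros} gives $l=\gcd(k,n)\ne 1$; and because $l\mid n$ we have $\gcd(l,n)=l\ne 1$, so a second application of Lemma~\ref{zeros} (using $1<l\le k\le n-1<n$) shows $l\in Z^*(\mathbb{Z}_n)$. Hence $[l]_\sim\in Z(\mathbb{Z}_n/\sim)^*$, and of course $l\mid n$ by construction. The case $l=k$ (when $k$ already divides $n$) is then covered automatically.

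Finally, I would invoke Lemma~\ref{ann} with the chosen $k\neq 0$ and $l=\gcd(k,n)\ne 1$ to conclude $\ann_{R}(k)=\ann_{R}(l)$, i.e.\ $k\sim l$, so that $[k]_\sim=[l]_\sim$. This produces the desired representative dividing $n$.

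I do not expect a real obstacle here; the only thing to be careful about is the degenerate bookkeeping at the edges of the range $0\le k<n$ — namely confirming that $l$ lands strictly between $1$ and $n$ so that it genuinely names a vertex rather than collapsing to $[0]_\sim$, and that invoking Lemma~\ref{ann} is legitimate (which needs precisely $l=\gcd(k,n)\neq 1$, supplied by Lemma~\ref{zeros}). Everything else is immediate.
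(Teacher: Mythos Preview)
Your proposal is correct and follows essentially the same approach as the paper: set $l=\gcd(k,n)$, use Lemma~\ref{zeros} to get $l\neq 1$, and then invoke Lemma~\ref{ann} to conclude $[k]_\sim=[l]_\sim$ with $l\mid n$. Your version simply adds the bookkeeping (normalizing $k$ and verifying $1<l<n$ so that $[l]_\sim$ is genuinely a vertex) that the paper leaves implicit.
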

\begin{proof}
  From lemma~\ref{zeros}, $k\in {Z^{*}(\mathbb{Z}_n)}$ implies that $g.c.d(k,n)\neq1$ say, $g.c.d(k,n)=l\neq1$, so from lemma~\ref{ann}, $\ann_{R}(k)=\ann_{R}(l)$ which means that $[k]_{\sim}=[l]_{\sim}$ and $l/n$ as required.
\end{proof}
In the previous corollary, we identify the vertices $Z(R/\sim)^*$ of $IA(R)$ in view of properties of $\mathbb{Z}_n$ and we use it in proving the following two lemmas. In the next lemma, we determine when the vertices of the graph $IA(R)$ are adjacent.
\begin{lemma}\label{adj}
  Let ${[k]_{\sim},\, [l]_{\sim}}\in Z(R/\sim)^*$ such that $[k]_{\sim}\neq[l]_{\sim}$. Then $g.c.d(k,l)\neq1$ if and only if $[k]_{\sim}$ is adjacent to $[l]_{\sim}$.
\end{lemma}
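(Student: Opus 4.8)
The plan is to pass first to the canonical representatives provided by Corollary~\ref{vertices}, so that we may assume $k\mid n$ and $l\mid n$. This is legitimate because $\ann_{R}(x)$ depends only on the class $[x]_{\sim}$, so the clause ``for some representative'' in the definition of adjacency is immaterial: adjacency of $[k]_{\sim}$ and $[l]_{\sim}$ just means $\ann_{R}(k)\cap\ann_{R}(l)\neq\{0\}$. I would also record the harmless remark that for a divisor $k$ of $n$ one has $\ann_{R}(k)=(n/k)\mathbb{Z}_n$, so distinct divisors of $n$ lie in distinct classes; in particular the hypothesis $[k]_{\sim}\neq[l]_{\sim}$ gives $k\neq l$.

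For the forward implication, suppose $g.c.d(k,l)=d\neq1$. Since $d\mid k\mid n$, the element $h=n/d$ is a well-defined nonzero element of $\mathbb{Z}_n$ (as $1<d$ forces $0<n/d<n$). Because $d$ divides both $k$ and $l$, we get $hk=n(k/d)\equiv 0$ and $hl=n(l/d)\equiv 0 \pmod n$, so $h\in\bigl(\ann_{R}(k)\cap\ann_{R}(l)\bigr)\setminus\{0\}$. Hence $[k]_{\sim}$ is adjacent to $[l]_{\sim}$.

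For the converse, assume $[k]_{\sim}$ is adjacent to $[l]_{\sim}$, so there is $h\in\mathbb{Z}_n$, $h\neq0$, with $hk\equiv 0$ and $hl\equiv 0 \pmod n$; equivalently $n\mid hk$ and $n\mid hl$. Then $n$ divides $g.c.d(hk,hl)=h\cdot g.c.d(k,l)$. If $g.c.d(k,l)=1$ this would give $n\mid h$, i.e.~$h=0$ in $\mathbb{Z}_n$, a contradiction; therefore $g.c.d(k,l)\neq1$.

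I do not expect any genuine obstacle here. The only points needing care are the bookkeeping that allows the reduction to representatives dividing $n$ via Corollary~\ref{vertices}, and the elementary identity $g.c.d(hk,hl)=h\,g.c.d(k,l)$ that powers the converse. If one prefers a single computation handling both directions, one can instead show directly, using $\ann_{R}(k)=(n/k)\mathbb{Z}_n$ for $k\mid n$ together with $\mathrm{lcm}(n/k,n/l)=n/g.c.d(k,l)$, that $\ann_{R}(k)\cap\ann_{R}(l)=\bigl(n/g.c.d(k,l)\bigr)\mathbb{Z}_n$, which is $\{0\}$ precisely when $g.c.d(k,l)=1$.
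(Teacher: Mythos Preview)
Your proof is correct and essentially mirrors the paper's: both reduce to representatives $k,l$ dividing $n$ via Corollary~\ref{vertices} and, for the forward direction, exhibit $n/\gcd(k,l)$ as a nonzero common annihilator. Your converse is a touch more direct---using $\gcd(hk,hl)=h\gcd(k,l)$ immediately---whereas the paper first replaces the common annihilator $h$ by a divisor $m$ of $n$ (invoking Corollary~\ref{vertices} a second time) before factoring, but the two arguments are otherwise the same.
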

\begin{proof}
  Let ${[k]_{\sim},\, [l]_{\sim}}\in Z(R/\sim)^*$ where, $[k]_{\sim}\neq[l]_{\sim}$ and without any loss of generality, we assume $k/n$ and $l/n$.\\
  Assume that $g.c.d(k,l)=r$ with $r\neq1$. Then there are non-zero $q,\,p\in \mathds{N}$ such that $k=qr$ and $l=pr$.
  Since $k/n$ and $l/n$, so that $r/n$, then there is a positive integer $m$ such that $n=mr$. By substitution,
  \begin{eqnarray*}
  % \nonumber % Remove numbering (before each equation)
    mk &=& mqr = qn \\
       &=& 0 \,(\text{mod}\,n) \\
    ml &=& mpr = pn\\
       &=& 0 \,(\text{mod}\, n)
  \end{eqnarray*}
  i.e.~$m\in {\ann_{R}(k)\cap \ann_{R}(l)}$. Therefore $[k]_{\sim}$ is adjacent to $[l]_{\sim}$.\\
  For the converse, assume that $[k]_{\sim}$ is adjacent to $[l]_{\sim}$. Then there is $h\in Z^{*}(\mathbb{Z}_n)$ such that $h\in \ann_{R}(k)\cap \ann_{R}(l)$. Thus from corollary~\ref{vertices} there is $m\in Z^{*}(\mathbb{Z}_n)$ such that $[h]_{\sim}=[m]_{\sim}$ and $m/n$ which implies that $m\in \ann_{R}(k)\cap \ann_{R}(l)$. Hence $mk=0\,(\text{mod}\,n)$ and $ml=0\,(\text{mod}\,n)$. Then there are positive integers $p,\,q$ such that $mk=pn$ and $ml=qn$. Since $m/n$, then there is positive integer $r\neq1$ such that $n=rm$. It follows that $mk=prm$ and $ml=qrm$. By cancellation of $m\neq0$; $k=pr$, $l=qr$ and $r\neq1$. Therefore $g.c.d(k,l)\neq1$.
\end{proof}
In the next lemma, we show that if $n$ is divisible by at least three primes, then the graph $IA(\mathbb{Z}_n)$ is always connected and its diameter is less than or equal to two and therefore it has a cycle of length three.
\begin{lemma}\label{thm2}
  Let $n=\prod_{i=1}^{m}{p_i}$, where $m\geq3$ and $p_i$'s are prime numbers. Then $IA(R)$ is connected graph with $\dm(IA(R))\leq2$ and $\gr(IA(R))=3$
\end{lemma}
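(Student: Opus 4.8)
The plan is to reduce the whole statement to an elementary assertion about the proper divisors of $n$, using Corollary~\ref{vertices} and Lemma~\ref{adj}. First I would record that the $p_i$ are taken to be distinct primes (otherwise, e.g., $n=p^{3}$, the graph has only two vertices and no cycle, contradicting the girth claim), so that the divisors $d$ of $n$ with $1<d<n$ are exactly the products of non-empty proper subsets of $\{p_{1},\dots,p_{m}\}$. By Corollary~\ref{vertices} every vertex of $IA(R)$ is represented by such a divisor, and for $d\mid n$ one has $\ann_{R}(d)=(n/d)\mathbb{Z}_{n}$, so distinct divisors of $n$ give distinct annihilators and hence distinct vertices. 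In particular $[p_{1}]_{\sim},[p_{2}]_{\sim},[p_{3}]_{\sim}$ are three distinct vertices, so $\mid Z(R/\sim)^{*}\mid\,>2$. Finally, by Lemma~\ref{adj}, two distinct vertices with divisor representatives $k$ and $l$ are adjacent if and only if $g.c.d(k,l)\neq1$.

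For connectedness and the bound $\dm(IA(R))\leq2$, I would take two distinct non-adjacent vertices, represented by divisors $k,l$ of $n$ with $1<k,l<n$ and $g.c.d(k,l)=1$, and exhibit a common neighbour. Pick a prime $p\mid k$ and a prime $q\mid l$; since $g.c.d(k,l)=1$ we have $p\neq q$, so $d:=pq$ divides $n$. Because $m\geq3$ there is a third prime among the $p_{i}$ dividing $n$, hence $1<pq<n$ and $[d]_{\sim}\in Z(R/\sim)^{*}$. If $d=k$ then $q\mid k$ and $q\mid l$, contradicting $g.c.d(k,l)=1$; likewise $d\neq l$, so $[d]_{\sim}$ is a vertex distinct from $[k]_{\sim}$ and $[l]_{\sim}$. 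Since $p\mid g.c.d(d,k)$ and $q\mid g.c.d(d,l)$, Lemma~\ref{adj} shows $[d]_{\sim}$ is adjacent to both $[k]_{\sim}$ and $[l]_{\sim}$, so $d([k]_{\sim},[l]_{\sim})=2$. Hence $IA(R)$ is connected with $\dm(IA(R))\leq2$.

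For the girth, once $IA(R)$ is known to be connected with $\mid Z(R/\sim)^{*}\mid\,>2$, Corollary~\ref{grth} immediately gives $\gr(IA(R))=3$; alternatively one may directly display the triangle $[p_{1}p_{2}]_{\sim}-[p_{1}p_{3}]_{\sim}-[p_{2}p_{3}]_{\sim}-[p_{1}p_{2}]_{\sim}$, whose three vertices are pairwise non-coprime proper divisors of $n$ (each strictly less than $n$ precisely because $m\geq3$), so $IA(R)$ contains a cycle and Theorem~\ref{th2} applies.

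The only real care needed is the bookkeeping in the middle step: verifying that the candidate intermediate vertex $pq$ is an honest vertex (a divisor strictly between $1$ and $n$ — this is exactly where the hypothesis $m\geq3$ is used) and that it is genuinely different from $k$ and $l$. Beyond that, the argument is a direct consequence of Corollary~\ref{vertices}, Lemma~\ref{adj}, and Corollary~\ref{grth}.
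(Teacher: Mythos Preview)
Your proof is correct and follows essentially the same route as the paper's: pass to divisor representatives via Corollary~\ref{vertices}, use Lemma~\ref{adj} to turn adjacency into a gcd condition, bridge two coprime divisors $k,l$ through $u=pq$ with $p\mid k$ and $q\mid l$, and then invoke Corollary~\ref{grth} for the girth. Your bookkeeping is actually more careful than the paper's (you explicitly justify $pq<n$ and $pq\neq k,l$, and you flag the tacit assumption that the $p_i$ are distinct, without which the girth claim can fail for $n=p^{3}$).
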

\begin{proof}
  Let $[v_1]_{\sim},\,[v_2]_{\sim}\in Z^{*}(R/\sim)$ such that $[v_1]_{\sim}\neq[v_2]_{\sim}$. Then by corollary~\ref{vertices}, there exist $k,\,l\in Z^{*}(R)$ such that $[k]_{\sim}=[v_1]_{\sim}$ and $[l]_{\sim}=[v_2]_{\sim}$ with $k/n$ and $l/n$. We have two cases, if $g.c.d(k,\,l)\neq1$, then by lemma~\ref{adj}, $[k]_{\sim}$ is adjacent to $[l]_{\sim}$. It follows that $d(k,\,l)=d(v_1,\,v_2)=1$. If $g.c.d(k,\,l)=1$, let $k=\prod_{i=1}^{r}{p_{t_i}}$ and $l=\prod_{i=1}^{s}{p_{j_i}}$, where the set $\{p_{t_i}\}_{i=1}^{r}\subset \{p_{i}\}_{i=1}^{m}$ is distinct from the set $\{p_{j_i}\}_{i=1}^{s}\subset \{p_{i}\}_{i=1}^{m}$ and $r,\,s < m$. Let $u=p_{j_f}p_{t_g}$ for some $p_{j_f}\in \{p_{j_i}\}_{i=1}^{s}$ and $p_{t_g}\in\{p_{t_i}\}_{i=1}^{r}$. Then $[u]_{\sim}\neq[k]_{\sim}$, $[u]_{\sim}\neq[l]_{\sim}$ and $[u]_{\sim}\neq[0]_{\sim}$. By lemma~\ref{adj}, $[k]_{\sim}-[u]_{\sim}-[l]_{\sim}$. Thus $d(k,l)=d(v_1,\,v_2)=2$. It follows that $IA(R)$ is connected and $\dm(IA(R))\leq2$. From corollary~\ref{grth}, $\gr(IA(R))=3$.
\end{proof}
\begin{example}\label{example}{In the following, let $p,\,q$ and $r$ be distinct prime numbers. Then we have the following three cases:
 \begin{enumerate}
  \item {Let $R=\mathbb{Z}_{pq}$. From corollary~\ref{vertices}, ${Z(R/\sim)^*}=\{[p]_{\sim},\, [q]_{\sim}\}$. Since $g.c.d(p,q)=1$, then from Lemma~\ref{adj}, the graph $IA(R)$ is a totally disconnected graph as shown in figure~\ref{graph2}.}
     \begin{figure}[H]%
      \centering
      \begin{tikzpicture}[scale=0.7,every node/.style={fill, circle,draw,scale=.5}]
      \node(n1) at (-1,1){};
      \node(n2) at (1,1){};
      \node[draw=none,rectangle, above=3mm,fill=none] (n1) at (-1,1){$[p]_{\sim}$};
      \node[draw=none,rectangle, above=3mm,fill=none] (n2) at (1,1) {$[q]_{\sim}$};
      \end{tikzpicture}
      \caption{\label{graph2} $IA(R)$} {\footnotesize{$R=\mathbb{Z}_{pq}$}}%
     \end{figure}

  \item {Let $R=\mathbb{Z}_{pqr}$. From corollary~\ref{vertices}, the set of vertices is $${Z(R/\sim)^*}=\{[p]_{\sim},\,[q]_{\sim},\,[r]_{\sim},\,[pq]_{\sim},\,[pr]_{\sim},\,[qr]_{\sim}\}.$$ Then the graph is connected, $\textup{diam}(IA(R))=2$ and $\textup{gr}(IA(R))=3$ as shown in figure~\ref{graph3}.}
     \begin{figure}[H]%
     \centering
     \begin{tikzpicture}[scale=0.7,every node/.style={fill, circle,draw,scale=.5}]
      \node(n1) at (-1,1){};
      \node(n2) at (1,1){};
      \node(n3) at (3,1){};
      \node(n4) at (-1,-1){};
      \node(n5) at (3,-1){};
      \node(n6) at (1,-3){};
      \draw[-] (n1)--(n2);
      \draw[-] (n1)--(n4);
      \draw[-] (n2)--(n3);
      \draw[-] (n2)--(n4);
      \draw[-] (n2)--(n5);
      \draw[-] (n3)--(n5);
      \draw[-] (n4)--(n5);
      \draw[-] (n4)--(n6);
      \draw[-] (n5)--(n6);
      \node[draw=none,rectangle, left=3mm,fill=none] (n1) at (-1,1){$[p]_{\sim}$};
      \node[draw=none,rectangle, above=3mm,fill=none] (n2) at (1,1) {$[pr]_{\sim}$};
      \node[draw=none,rectangle, right=3mm,fill=none] (n3) at (3,1) {$[r]_{\sim}$};
      \node[draw=none,rectangle, left=3mm,fill=none] (n4) at (-1,-1){$[pq]_{\sim}$};
      \node[draw=none,rectangle, right=3mm,fill=none] (n5) at (3,-1) {$[qr]_{\sim}$};
      \node[draw=none,rectangle, below=3mm,fill=none] (n6) at (1,-3){$[q]_{\sim}$};
      \end{tikzpicture}
      \caption{\label{graph3}$IA(R)$} {\footnotesize{$R=\mathbb{Z}_{pqr}$}}%
     \end{figure}

  \item\label{3} {Let $R=\mathbb{Z}_{p^m}$ for a positive integer $m$. In that case $Z(R/\sim)^*=\{[p^i]_{\sim}|1\leq i<m\}$ and all vertices are adjacent, so it will be a complete graph with $(m-1)$-vertices. This means that $IA(R)=K_{m-1}$.}
 \end{enumerate}}

\end{example}
\begin{remark}\leavevmode
   \begin{itemize}
     \item  {Under the condition of lemma~\ref{thm2}, if there is at least one distinct prime, then the graph is connected and $\dm(IA(R))=2$. If there is no distinct primes, then this is the case in part~\ref{3} of example~\ref{example}.}
     \item  {From \ref{3} in example~\ref{example}, all complete graphs of $m$-vertices may be realized as $IA(R)=K_m$. For instance, we can take $R=\mathbb{Z}_{p^{m+1}}$.}
   \end{itemize}
\end{remark}
\end{Section}
%
%%%%%%%%%%%%%%%%%%%%%%%%%%%%%%%%%%%%%%%%%%%%%%%%%%%%%%%%%%%%%%%%%%%%%%%%%%%%%%%%%%%%%%%%%%%%%%%%%%%%%%%%%%%%%%%%%%%%%%%%%%%%%%%%%%%%%%%%%%%%%
%
\begin{Section}{Compressed Intersection Annihilator Graph of a Finite Product of Rings}
\noindent
In this section, we study the properties of the graph $IA(R)$ when the ring $R$ is one of the following:
\begin{itemize}
  \item {The finite direct product of two or more integral domains with non-zero identities.}
  \item {The finite product of Artinian local rings.}
  \item {The direct product of two rings such that one of them is not an integral domain.}
\end{itemize}
Now, we investigate the case when $R$ is the finite direct product of two or more integral domains with non-zero identities. We notice that the graph on an integral domain is empty.

\begin{theorem}\label{thm1}
  Let $R=A\times B$ where $A$ and $B$ are integral domains with non-zero identities. Then $IA(R)$ is a totally disconnected graph.
\end{theorem}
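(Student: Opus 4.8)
The plan is to pin down the vertex set $Z(R/\sim)^*$ completely using the product structure, and then observe that its (very few) vertices are pairwise non-adjacent. First I would identify the zero-divisors of $R=A\times B$: for $(a,b)\in R$, if $(a,b)(c,d)=(0,0)$ with $(c,d)\neq(0,0)$, then $ac=0$ and $bd=0$, and since $A$ and $B$ are integral domains this forces $a=0$ or $b=0$; conversely every such element with one coordinate zero is a zero-divisor. Hence $Z^{*}(R)=\big((A\setminus\{0\})\times\{0\}\big)\cup\big(\{0\}\times(B\setminus\{0\})\big)$.

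Next I would compute the annihilators. For $a\neq 0$ in $A$ we have $(c,d)(a,0)=(0,0)$ iff $ca=0$ iff $c=0$ (integral domain), so $\ann_{R}((a,0))=\{0\}\times B$, a value independent of the chosen $a$; symmetrically $\ann_{R}((0,b))=A\times\{0\}$ for every $b\neq0$. Since $A$ and $B$ have non-zero identities, $\{0\}\times B\neq A\times\{0\}$, so $Z(R/\sim)^{*}$ consists of exactly the two distinct classes $[(1,0)]_{\sim}$ and $[(0,1)]_{\sim}$. Finally I would check adjacency: $\ann_{R}((1,0))\cap\ann_{R}((0,1))=(\{0\}\times B)\cap(A\times\{0\})=\{(0,0)\}$, and because the intersection condition is well defined on classes (as noted just before the definition of $IA(R)$), this holds for all representatives. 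Therefore the two vertices are not adjacent, $IA(R)$ has no edges, and so $IA(R)$ is totally disconnected.

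There is essentially no real obstacle in this argument; the only point requiring care is that the annihilator of a ``pure'' element such as $(a,0)$ is exactly the full complementary factor $\{0\}\times B$ and is independent of $a$, which is precisely where the integral-domain hypothesis on $A$ (and on $B$) is used. If $A$ had a zero-divisor, $\ann_{R}((a,0))$ could be strictly larger, potentially yielding additional vertices and edges; this is exactly the phenomenon explored in the later theorems about products in which a factor is not a domain.
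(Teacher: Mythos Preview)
Your proof is correct and follows essentially the same approach as the paper's: both determine $Z^{*}(R)$ via the product structure, compute the annihilators of the two types of nonzero zero-divisors, and conclude that $Z(R/\sim)^{*}$ has exactly the two classes $[(1,0)]_{\sim}$ and $[(0,1)]_{\sim}$. The only cosmetic difference is in the last step: you verify non-adjacency by directly computing $(\{0\}\times B)\cap(A\times\{0\})=\{(0,0)\}$, whereas the paper observes that $Z(R)$ is not an ideal and invokes Remark~\ref{card 2} (via Theorem~\ref{ideal}) to conclude that a two-vertex $IA(R)$ must be edgeless.
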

\begin{proof}
 We need to determine the set of vertices $Z(R/\sim)^*$. Let $(x,\,y)\in Z^{*}(R)$, then there is $(h,\,k)\in Z^{*}(R)$ such that $(h,\,k)(x,\,y)=(0_A,\,0_B)$. Hence $hx=0_A$ and $ky=0_B$ and since $A,\,B$ are integral domains, then $x=0_A$ or $h=0_A$ and $y=0_B$ or $k=0_B$. Therefore, the set of zero divisors without the zero element $(0_A,\,0_B)$ may be partitioned into two disjoint sets, $V_A=\{(a,0_B)\in R\,|\, a\in A\setminus\{0_A\}\}$ and $V_B=\{(0_A,b)\in R\,|\, b\in B\setminus\{0_B\}\}$. Thus $Z^{*}(R)=V_A\cup V_B$. But we can easily show that $\ann_{R}(u)=V_B$ for all $u\in V_A$ and similarly, $\ann_{R}(v)=V_A$ for all $v\in V_B$. Therefore $V_A\subseteq [(1_A,\,0_B)]_{\sim}$ and $V_B\subseteq [(0_A,\,1_B)]_{\sim}$. Thus $Z(R/{\sim})^*=\{[(1_A,\,0_B)]_{\sim},\,[(0_A,\,1_B)]_{\sim}\}$. Since $|Z(R/{\sim})^*|=2$ and $Z(R)$ is not an ideal. Thus from remark~\ref{card 2}, $IA(R)$ is totally disconnected graph with two vertices.
\end{proof}
\begin{example}
  {Let $R=\mathbb{Z}_p\times \mathbb{Z}_q$, where $p$ and $q$ are any two primes. Then $Z(R/\sim)^*=\{[(1,0)]_{\sim},\,[(0,\,1)]_{\sim}\}$ and the graph $IA(R)$ is a disconnected graph as shown in figure~\ref{graph4}.
  \begin{figure}[H]%
      \centering%
  \[    \begin{tikzpicture}[scale=0.7,every node/.style={fill, circle,draw,scale=.5}]%
      \node(n1) at (-1,1){};
      \node(n2) at (1,1){};
      \node[draw=none,rectangle, above=3mm,fill=none] (n1) at (-1,1){$[(1,\,0)]_{\sim}$};
      \node[draw=none,rectangle, above=3mm,fill=none] (n2) at (1,1) {$[(0,\,1)]_{\sim}$};
      \end{tikzpicture}%
\]
      \caption{\label{graph4}$IA(R)$} {\footnotesize{$R=\mathbb{Z}_p\times \mathbb{Z}_q$}}%
     \end{figure}}%
\end{example}
\begin{theorem}\label{thm3}
  Let $\{A_i\}_{i=1}^{n}$ be a set of integral domains with non-zero identities with an integer $n>2$, and let $R=\prod_{i=1}^{n}{A_i}$. Then $IA(R)$ is connected with $\dm(IA(R))=2$ and $\gr(IA(R))=3$.
\end{theorem}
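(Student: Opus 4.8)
The plan is to replace $IA(R)$ by a combinatorial model indexed by coordinates. For $x=(x_1,\dots,x_n)\in R$ set $I(x)=\{\,i : x_i\neq 0_{A_i}\,\}$. Since each $A_i$ is an integral domain, $z=(z_1,\dots,z_n)$ annihilates $x$ iff $z_i=0$ for every $i\in I(x)$, so $\ann_R(x)$ depends only on $I(x)$; moreover $x\in Z^*(R)$ iff $I(x)$ is a non-empty proper subset of $\{1,\dots,n\}$ (a full support gives a regular element, the empty support gives $0$). Writing $e_I$ for the idempotent whose coordinates are $1$ exactly on $I$, one gets $[x]_\sim=[e_{I(x)}]_\sim$ and $[e_I]_\sim=[e_J]_\sim$ iff $I=J$; hence $Z(R/\sim)^*$ is in bijection with the set $\mathcal S$ of non-empty proper subsets of $\{1,\dots,n\}$, and $|Z(R/\sim)^*|=2^n-2$. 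Furthermore, for $I\neq J$ in $\mathcal S$ we have $\ann_R(e_I)\cap\ann_R(e_J)=\{\,z : z_i=0 \text{ for all } i\in I\cup J\,\}$, which is non-zero exactly when $I\cup J\neq\{1,\dots,n\}$. Thus $IA(R)$ is isomorphic to the graph on $\mathcal S$ in which $I$ and $J$ are adjacent iff $I\neq J$ and $I\cup J\neq\{1,\dots,n\}$; this extends the computation in the proof of Theorem~\ref{thm1}, which is the case $n=2$.

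Next I would prove $\dm(IA(R))=2$. The graph is not complete: for $n>2$ the vertices $\{1\}$ and $\{2,\dots,n\}$ are distinct elements of $\mathcal S$ with union $\{1,\dots,n\}$, hence non-adjacent, so $\dm(IA(R))\geq 2$. For the upper bound, take distinct non-adjacent $I,J\in\mathcal S$, so $I\cup J=\{1,\dots,n\}$; then the complements $\overline I,\overline J$ are non-empty and disjoint. Pick $a\in\overline I$ and $b\in\overline J$ (so $a\neq b$) and, using $n>2$, pick $c\in\{1,\dots,n\}\setminus\{a,b\}$. I claim $W=\{c\}$ is a common neighbour of $I$ and $J$: it lies in $\mathcal S$; since $a\notin W\cup I$ and $b\notin W\cup J$, both $W\cup I$ and $W\cup J$ are proper, i.e.\ $W$ is adjacent to each of $I$ and $J$; and $W\neq I$, for $I=\{c\}$ together with $I\cup J=\{1,\dots,n\}$ and $J$ proper would force $J=\{1,\dots,n\}\setminus\{c\}$, hence $b\in\overline J=\{c\}$, contradicting $c\neq b$; symmetrically $W\neq J$. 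Therefore $d(I,J)=2$, so $\dm(IA(R))\leq 2$, and with the lower bound $\dm(IA(R))=2$; in particular $IA(R)$ is connected.

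For the girth, since $IA(R)$ is connected and $|Z(R/\sim)^*|=2^n-2>2$ for $n>2$, Corollary~\ref{grth} gives $\gr(IA(R))=3$; concretely, $\{1\},\{2\},\{3\}$ form a triangle because each pairwise union has size $2<n$.

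The main obstacle is the diameter-$2$ step, specifically verifying that the auxiliary vertex $W=\{c\}$ is a genuine vertex of $\mathcal S$ distinct from both $I$ and $J$; this is exactly where the hypothesis $n>2$ is used, since for $n=2$ the two vertices are isolated (Theorem~\ref{thm1}). The remaining work, namely the translation of $Z^*(R)$, $\ann_R$, and the adjacency relation into the language of supports, is routine once the idempotents $e_I$ are chosen as canonical representatives.
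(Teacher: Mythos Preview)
Your proof is correct. The paper argues directly with ring elements rather than first passing to your combinatorial model on subsets: given non-adjacent $[X]_\sim,[Y]_\sim$ with $x_k=0$ and $y_l=0$, it exhibits a middle vertex $Z$ having zeros in coordinates $k$ and $l$ and observes that the idempotent $I_{A_k}$ lies in $\ann_R(X)\cap\ann_R(Z)$ while $I_{A_l}$ lies in $\ann_R(Y)\cap\ann_R(Z)$, yielding the path $[X]_\sim-[Z]_\sim-[Y]_\sim$. In your subset language the paper's $Z$ corresponds to $\{1,\dots,n\}\setminus\{k,l\}$ with $k\in\overline I$ and $l\in\overline J$, whereas you pick the singleton $W=\{c\}$ with $c\notin\{a,b\}$; both work as common neighbours once $I\cup J=\{1,\dots,n\}$ is known. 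Your abstraction buys a cleaner verification that the auxiliary vertex is genuinely distinct from the two endpoints, a point the paper passes over. The lower bound $\dm(IA(R))\geq 2$ and the girth are handled the same way in both proofs, via the explicit non-adjacent pair $(1_{A_1},\dots,1_{A_{n-1}},0)$, $(0,1_{A_2},\dots,1_{A_n})$ and an appeal to Corollary~\ref{grth}.
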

\begin{proof}
  Let $X,\,Y\in Z^{*}(R)$. Then $X=(a_1,\,a_2,\,...,\,a_n)$ and $Y=(b_1,\,b_2,\,...,\,b_n)$, where $a_i,\,b_i\in A_i$ with $a_k=0_{A_{k}}$ and $b_l=0_{A_l}$ for some $1\leq k,\,l\leq n$. Let $[X]_{\sim}\neq[Y]_{\sim}$ and $\ann_{R}(X)\cap \ann_{R}(Y)=\{0\}$. Let $Z=(c_1,\,c_2,\,...,\,c_n)\in Z^{*}(R)$ with $c_k=0_{A_k}$ and $c_l=0_{A_l}$. Define for any $1\leq j\leq n$, $I_{A_j}=(u_1,\,u_2,\,...,\,u_n)$ with $u_j=1_{A_j}$ and $u_i=0_{A_i}$ for all $1\leq i\leq n$ and $i\neq j$. Then we can easily check that $I_{A_k}X=0$ and $I_{A_k}Z=0$ also, $I_{A_l}Y=0$ and $I_{A_l}Z=0$, so that, there is an edge between $[X]_{\sim}$ and $[Z]_{\sim}$ and an edge between $[Z]_{\sim}$ and $[Y]_{\sim}$. Therefore there is a path between $[X]_{\sim}$ and $[Y]_{\sim}$. Thus $IA(R)$ is connected. Moreover, $\dm(IA(R))\leq2$. There are vertices which are not adjacent. For, $W=(1_{A_1},\,1_{A_2},\,....\,1_{A_{n-1}},\,0_{A_n})\in R$ and $V=(0_{A_1},\,1_{A_2},\,...,\,1_{A_n})\in R$ to find an element to annihilate $V$ and $W$ together it must be $0=(0_{A_1},\,...,\,0_{A_n})\in R$, it follows that $\dm(IA(R))=2$. From corollary \ref{grth}, $\gr(IA(R))=3$.
\end{proof}
%
%\begin{remark}
 % $|Z^{*}(R/\sim)|=2^n-2$ for $R$ as defined in the previous theorem.
%\end{remark}
%
Since the only fact that we used in the proofs of theorems~\ref{thm1} and~\ref{thm3} is that $A_i$ for $1\leq i\leq n$ are integral domains, so the different graphs constructed for a given $n$ would be isomorphic.

\begin{example}\label{2}{Graph \ref{graph6} represents the following two cases:
  \begin{enumerate}
    \item {Let $p,\,q,\,r$ be three prime numbers and $R=\mathbb{Z}_p\times \mathbb{Z}_q\times \mathbb{Z}_r$. Then
    \begin{eqnarray}
    \nonumber
      Z(R/\sim)^*&=&\{[(0,\,0,\,1)]_{\sim},\,[(0,\,1,\,0)]_{\sim},\,[(1,\,0,\,0)]_{\sim},\,[(0,\,1,\,1)]_{\sim},\,[(1,\,0,\,1)]_{\sim}, \\
    \nonumber    && [(1,\,1,\,0)]_{\sim}\}.
    \end{eqnarray}}
    \item {\label{4}Let $R=\mathbb{Z}\times \mathbb{Z}\times \mathbb{Z}$. Then the set of vertices is
    \begin{eqnarray}
    \nonumber
      Z(R/\sim)^*&=&\{[(0,\,0,\,1)]_{\sim},\,[(0,\,1,\,0)]_{\sim},\,[(1,\,0,\,0)]_{\sim},\,[(0,\,1,\,1)]_{\sim},\,[(1,\,0,\,1)]_{\sim}, \\
    \nonumber    && [(1,\,1,\,0)]_{\sim}\}.
    \end{eqnarray}}
  \end{enumerate}
  \begin{figure}[H]%
     \centering%
     \begin{tikzpicture}[scale=0.7,every node/.style={fill, circle,draw,scale=.5}]%
      \node(n1) at (-1,1){};
      \node(n2) at (1,1){};
      \node(n3) at (3,1){};
      \node(n4) at (-1,-1){};
      \node(n5) at (3,-1){};
      \node(n6) at (1,-3){};
      \draw[-] (n1)--(n2);
      \draw[-] (n1)--(n4);
      \draw[-] (n2)--(n3);
      \draw[-] (n2)--(n4);
      \draw[-] (n2)--(n5);
      \draw[-] (n3)--(n5);
      \draw[-] (n4)--(n6);
      \draw[-] (n4)--(n5);
      \draw[-] (n5)--(n6);
      \node[draw=none,rectangle, left=3mm,fill=none] (n1) at (-1,1) {$[(0,\,1,\,1)]_{\sim}$};
      \node[draw=none,rectangle, above=3mm,fill=none](n2) at (1,1)  {$[(0,\,1,\,0)]_{\sim}$};
      \node[draw=none,rectangle, right=3mm,fill=none](n3) at (3,1)  {$[(1,\,1,\,0)]_{\sim}$};
      \node[draw=none,rectangle, left=3mm,fill=none] (n4) at (-1,-1){$[(0,\,0,\,1)]_{\sim}$};
      \node[draw=none,rectangle, right=3mm,fill=none](n5) at (3,-1) {$[(1,\,0,\,0)]_{\sim}$};
      \node[draw=none,rectangle, below=3mm,fill=none](n6) at (1,-3) {$[(1,\,0,\,1)]_{\sim}$};
      \end{tikzpicture}%
      \caption{\label{graph6}$IA(R)$} {\footnotesize{$R=\mathbb{Z}_p\times \mathbb{Z}_q\times \mathbb{Z}_r$  \&\\ $R=\mathbb{Z}\times \mathbb{Z}\times \mathbb{Z}$}}%
     \end{figure}}%
\end{example}
\begin{remark}\leavevmode
  \begin{itemize}
    \item {From the two parts in example~\ref{2}, we notice that we may have an isomorphic graphs for non-isomorphic rings and from part~\ref{4}, we may have a finite graph for an infinite ring.}
    \item {Let $p$, $q$ and $r$ be three distinct primes. The graph $IA(R)$ when $R=\mathbb{Z}_p\times \mathbb{Z}_q\times \mathbb{Z}_r$ is isomorphic to the graph $IA(R)$ when $R=\mathbb{Z}_{pqr}$ as shown in figures~\ref{graph3} and \ref{graph6}. The condition that $p$, $q$, and $r$ are distinct is essential, since for example, $IA(\mathbb{Z}_2\times \mathbb{Z}_2)$ is not isomorphic to $IA(\mathbb{Z}_4)$.}
  \end{itemize}
\end{remark}

Now, we investigate the properties of the compressed intersection annihilator graph $IA(R)$, when the ring $R$ is a finite direct product of Artinian local rings with non-zero identities. Notice that the graph on an Artinian local ring is complete since the set of zero divisors is an annihilator ideal. In the next theorem, we show that the graph $IA(R)$ is connected and calculate its diameter and girth.

\begin{theorem}\label{thm4}
  Let $\{R_i\}_{i=1}^{n}$ be a set of Artinian local rings with non-zero identities with an integer $n\geq2$, and let $M_i=Z(R_i)=\ann_{R}(a_i)$ be the unique maximal ideal of $R_i$ for each $1\leq i\leq n$ for some nilpotent element $a_i$, and let $R=\prod_{i=1}^{n}{R_i}$. Then $IA(R)$ is a connected graph with $\dm(IA(R))=2$ and $\gr(IA(R))=3$.
\end{theorem}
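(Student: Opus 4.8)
The plan is to exploit the product structure of $R=\prod_{i=1}^{n}R_i$. For $x=(x_1,\dots,x_n)\in R$ one has $\ann_{R}(x)=\prod_{i=1}^{n}\ann_{R_i}(x_i)$, so $x\sim y$ exactly when $\ann_{R_i}(x_i)=\ann_{R_i}(y_i)$ for every $i$, and $x\in Z^{*}(R)$ exactly when $x\neq 0$ and $x_j\in M_j$ for at least one index $j$. The preliminary facts I would record concern the elements $a_i$: since $\ann_{R_i}(a_i)=M_i$ is a proper ideal, $a_i\neq 0$, and since a nilpotent element is a non-unit we have $a_i\in M_i=\ann_{R_i}(a_i)$, hence $a_i^{2}=0$. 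Let $\varepsilon_i\in R$ denote the element whose $i$-th coordinate is $a_i$ and whose other coordinates are $0$; then $\varepsilon_i\neq 0$, $\varepsilon_i\in Z^{*}(R)$, $\varepsilon_i\varepsilon_j=0$ for all $i,j$ (for $i=j$ because $a_i^{2}=0$), and $\varepsilon_i\in\ann_{R}(x)$ whenever $x_i\in M_i$ (because then $a_i x_i=0$).

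For connectedness with $\dm(IA(R))\le 2$, I would take two distinct vertices $[X],[Y]$ with representatives $X=(x_1,\dots,x_n),\,Y=(y_1,\dots,y_n)\in Z^{*}(R)$, and fix indices $k,l$ with $x_k\in M_k$ and $y_l\in M_l$, so $\varepsilon_k\in\ann_{R}(X)$ and $\varepsilon_l\in\ann_{R}(Y)$. If $k=l$, then $\varepsilon_k$ is a non-zero element of $\ann_{R}(X)\cap\ann_{R}(Y)$, so $[X]-[Y]$. If $k\neq l$, put $Z:=\varepsilon_k+\varepsilon_l\neq 0$; its $k$-th coordinate lies in $M_k$, so $[Z]\in Z(R/\sim)^{*}$, and $\varepsilon_k,\varepsilon_l\in\ann_{R}(Z)$ because $\varepsilon_i\varepsilon_j=0$ for all $i,j$. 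If $[Z]$ is different from both $[X]$ and $[Y]$, then $[X]-[Z]-[Y]$ is a path, since $\varepsilon_k\in\ann_{R}(X)\cap\ann_{R}(Z)$ and $\varepsilon_l\in\ann_{R}(Z)\cap\ann_{R}(Y)$. If instead $[Z]=[X]$ (resp. $[Z]=[Y]$), then $\ann_{R}(X)=\ann_{R}(Z)$ contains $\varepsilon_l$ (resp. $\ann_{R}(Y)=\ann_{R}(Z)$ contains $\varepsilon_k$), so $\varepsilon_l$ (resp. $\varepsilon_k$) lies in $\ann_{R}(X)\cap\ann_{R}(Y)$ and $[X]-[Y]$. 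In every case $d([X],[Y])\le 2$, so $IA(R)$ is connected and $\dm(IA(R))\le 2$.

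To see the diameter is exactly $2$ (this is where $n\ge 2$ is used), I would take $W\in R$ with coordinates $1_{R_i}$ for $i<n$ and $0$ in the $n$-th coordinate, and $V\in R$ with $0$ in the first coordinate and $1_{R_i}$ for $i\ge 2$. Then $\ann_{R}(W)=\{0\}\times\cdots\times\{0\}\times R_n$ and $\ann_{R}(V)=R_1\times\{0\}\times\cdots\times\{0\}$; these are distinct non-zero annihilator ideals whose intersection is $\{0\}$, so $[W]$ and $[V]$ are distinct non-adjacent vertices and $\dm(IA(R))=2$. For the girth, the vertices $[\varepsilon_1],[\varepsilon_2],[\varepsilon_1+\varepsilon_2]$ are three distinct elements of $Z(R/\sim)^{*}$, since their annihilators $M_1\times R_2\times\cdots\times R_n$, $R_1\times M_2\times R_3\times\cdots\times R_n$ and $M_1\times M_2\times R_3\times\cdots\times R_n$ are pairwise distinct (each $M_i$ is a proper ideal of $R_i$); hence $|Z(R/\sim)^{*}|>2$ and Corollary~\ref{grth} gives $\gr(IA(R))=3$. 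In fact these three vertices already span a triangle, as all three pairwise intersections of the listed annihilators equal $M_1\times M_2\times R_3\times\cdots\times R_n\neq\{0\}$.

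The step that needs care is the case distinction for the connector $Z=\varepsilon_k+\varepsilon_l$: one has to observe that whenever $Z$ turns out to be $\sim$-equivalent to $X$ or to $Y$, the vertices $[X]$ and $[Y]$ are already adjacent, so no path of length $2$ is required. Everything else is routine bookkeeping with the coordinatewise formula for annihilators; the hypothesis that each $a_i$ is nilpotent is used only to guarantee $a_i\neq 0$ and $a_i^{2}=0$, which is precisely what makes the vectors $\varepsilon_i$ non-zero with $\varepsilon_i\varepsilon_j=0$ for all $i,j$.
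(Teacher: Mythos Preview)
Your proof is correct and follows essentially the same route as the paper: your $\varepsilon_j$ are the paper's $I_{R_j}$, your connector $Z=\varepsilon_k+\varepsilon_l$ is (a particular instance of) the paper's intermediate vertex, and the witnesses $W,V$ for $\dm=2$ and the triangle for $\gr=3$ coincide with the paper's. In fact your argument is a bit more careful than the paper's in two places: you separate the case $k=l$, and you explicitly handle the possibility $[Z]=[X]$ or $[Z]=[Y]$, which the paper's proof tacitly assumes does not occur.
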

\begin{proof}
  Let $X,\, Y\in R$ be two non-zero zero-divisors such that $[X]_{\sim}\neq[Y]_{\sim}$, then for each $1\leq i\leq n$, there exist $x_i,\,y_i\in R_i$, such that $X=(x_1,\,...,\,x_n)$ and $Y=(y_1,\,...,\,y_n)$ with $x_k\in M_k$ and $y_l\in M_l$ for some $1\leq k,\,l\leq n$. Let $Z=(z_1,\,...,\,z_n)$ with $z_k=a_k$, $z_l=a_l$ and $z_i\in R_i$ for all $i\neq k,\,l$. Define  $I_{R_j}=(u_1,\,...,\,u_n)$ where $u_j=a_j$ and $u_i=0_{R_i}$ for all $i\neq j$. By properties of nilpotent elements and $M_i=\ann_{R}(a_i)$, we can easily show that $I_{R_k}X=0$  and $I_{R_k}Y=0$, and also, $I_{R_l}Z=0$ and $I_{R_l}Y=0$. It follows that, there is an edge between $[X]_{\sim}$ and $[Z]_{\sim}$ and thus, there is an edge between $[Z]_{\sim}$ and $[Y]_{\sim}$. Therefore there is a path between $[X]_{\sim}$ and $[Y]_{\sim}$. This means that $IA(R)$ is connected and also $\dm(IA(R))\leq 2$. There are vertices which are not adjacent. For instance, $W=(1_{R_1},\,1_{R_2},\,....\,1_{R_{n-1}},\,0_{R_n})\in R$ and $V=(0_{R_1},\,1_{R_2},\,...,\,1_{R_n})\in R$ to find an element to annihilate $V$ and $W$ together it must be $0=(0_{R_1},\,...,\,0_{R_n})\in R$. So $\dm(IA(R))=2$. We know that from corollary \ref{grth}, $\gr(IA(R))=3$. Namely, for $n\neq2$, $I_{R_1}-I_{R_2}-I_{R_3}-I_{R_1}$ is a cycle of length three. For $n=2$ we have $(a_1,\,0_{R_2})-(a_1,\,a_2)-(0_{R_1},\,a_2)-(a_1,\,0_{R_2})$ is also a cycle of length three.
\end{proof}

\begin{example}
  {Let $R=\mathbb{Z}_{4}\times \mathbb{Z}_{4}$. Then the set of zero divisors is $$Z(R/\sim)^*=\{[(0,\,1)]_{\sim},\,[(1,\,0)]_{\sim},\,[(0,\,2)]_{\sim},\,[(2,\,0)]_{\sim},\,[(2,\,2)]_{\sim},\,[(2,\,1)]_{\sim},\,[(1,\,2)]_{\sim}\}.$$ So the graph $IA(R)$ is as in figure~\ref{graph7}.
 \begin{figure}[H]%
    \centering%
   \[ \begin{tikzpicture}[scale=0.7,every node/.style={fill, circle,draw,scale=.5}]%
      \node(n1) at (-1,1){};
      \node(n2) at (1,2){};
      \node(n3) at (3,1){};
      \node(n4) at (-2,-1){};
      \node(n5) at (4,-1){};
      \node(n6) at (0,-3){};
      \node(n7) at (2,-3){};
      \draw[-] (n1)--(n2);
      \draw[-] (n1)--(n3);
      \draw[-] (n1)--(n4);
      \draw[-] (n1)--(n5);
      \draw[-] (n1)--(n6);
      \draw[-] (n1)--(n7);
      \draw[-] (n2)--(n3);
      \draw[-] (n2)--(n4);
      \draw[-] (n2)--(n5);
      \draw[-] (n2)--(n6);
      \draw[-] (n2)--(n7);
      \draw[-] (n3)--(n5);
      \draw[-] (n3)--(n4);
      \draw[-] (n3)--(n6);
      \draw[-] (n3)--(n7);
      \draw[-] (n4)--(n6);
      \draw[-] (n5)--(n7);
      \node[draw=none,rectangle, left=3mm,fill=none] (n1) at (-1,1) {$[(0,\,2)]_{\sim}$};
      \node[draw=none,rectangle, above=3mm,fill=none](n2) at (1,2)  {$[(2,\,2)]_{\sim}$};
      \node[draw=none,rectangle, right=3mm,fill=none](n3) at (3,1)  {$[(2,\,0)]_{\sim}$};
      \node[draw=none,rectangle, left=3mm,fill=none] (n4) at (-2,-1){$[(0,\,1)]_{\sim}$};
      \node[draw=none,rectangle, right=3mm,fill=none](n5) at (4,-1) {$[(1,\,0)]_{\sim}$};
      \node[draw=none,rectangle, below=3mm,fill=none](n6) at (0,-3) {$[(2,\,1)]_{\sim}$};
      \node[draw=none,rectangle, below=3mm,fill=none](n7) at (2,-3) {$[(1,\,2)]_{\sim}$};
      \end{tikzpicture}%
\]
      \caption{\label{graph7}$IA(R)$} {\footnotesize{$R=\mathbb{Z}_4\times \mathbb{Z}_4$}}%
     \end{figure}%
}
\end{example}

In the next theorem, we show that if $R$ is finite direct product of two rings such that one of them is not an integral domain, then $IA(R)$ is connected and not complete graph with $\dm(IA(R))\leq3$ and $\gr(IA(R))=3$

\begin{theorem}
  Let $R_1$ and $R_2$ be two commutative rings with non-zero identities such that $Z^*(R_1)\neq\phi$ or $Z^*(R_2)\neq\phi$ and $R=R_1\times R_2$. Then $IA(R)$ is connected and not complete graph with $\dm(IA(R))\leq3$ and $\gr(IA(R))=3$.
\end{theorem}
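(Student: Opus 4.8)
The plan is to build everything around the two ``coordinate idempotents'' $e=(1_{R_1},0_{R_2})$ and $f=(0_{R_1},1_{R_2})$, using the elementary identity $\ann_{R}((x,y))=\ann_{R_1}(x)\times\ann_{R_2}(y)$ for all $(x,y)\in R$; in particular $\ann_{R}(e)=\{0\}\times R_2$ and $\ann_{R}(f)=R_1\times\{0\}$, both non-zero since $R_1,R_2$ have non-zero identities. By the symmetry $R_1\times R_2\cong R_2\times R_1$ I may assume $Z^*(R_1)\neq\emptyset$ and fix $a\in Z^*(R_1)$; put $h=(a,0_{R_2})$, so that $\ann_{R}(h)=\ann_{R_1}(a)\times R_2$ with $\ann_{R_1}(a)\neq\{0\}$. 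First I would record that $[e]_{\sim}$, $[f]_{\sim}$, $[h]_{\sim}$ are three pairwise distinct vertices (their annihilators are visibly different), that $[e]_{\sim}$ and $[f]_{\sim}$ are non-adjacent because $\ann_{R}(e)\cap\ann_{R}(f)=\{0\}$, and that $[h]_{\sim}$ is adjacent to both $[e]_{\sim}$ and $[f]_{\sim}$. This already yields that $IA(R)$ is not complete and has at least three vertices.

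For connectedness I would argue that $IA(R)$ is the union of two ``stars'' glued at $[h]_{\sim}$. Given any vertex $v=[(x,y)]_{\sim}$ we have $\ann_{R}(v)=\ann_{R_1}(x)\times\ann_{R_2}(y)\neq\{0\}$, so $\ann_{R_2}(y)\neq\{0\}$ or $\ann_{R_1}(x)\neq\{0\}$. In the first case $\ann_{R}(v)\cap\ann_{R}(e)=\{0\}\times\ann_{R_2}(y)\neq\{0\}$, so $v$ equals or is adjacent to $[e]_{\sim}$ (say $v$ is \emph{of type $E$}); in the second case, symmetrically, $v$ equals or is adjacent to $[f]_{\sim}$ (\emph{of type $F$}). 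Every vertex is of type $E$ or of type $F$, and the path $[e]_{\sim}-[h]_{\sim}-[f]_{\sim}$ joins the two families, so $IA(R)$ is connected.

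The diameter bound is the technical heart. Let $v_1,v_2$ be distinct non-adjacent vertices. If both are of type $E$, then $v_1-[e]_{\sim}-v_2$ has length $\le 2$; likewise if both are of type $F$. It remains to treat $v_1$ of type $E$ but not $F$, and $v_2$ of type $F$ but not $E$. Unwinding the annihilator conditions and setting aside the cases where two of the elements in play represent the same class, a vertex of type $E$ but not $F$ is either $[e]_{\sim}$ or of the form $[(x_1,y_1)]_{\sim}$ with $\ann_{R_1}(x_1)=\{0\}$ and $y_1\in Z^*(R_2)$ (so $\ann_{R}(v_1)=\{0\}\times\ann_{R_2}(y_1)$), and dually a vertex of type $F$ but not $E$ is either $[f]_{\sim}$ or $[(x_2,y_2)]_{\sim}$ with $x_2\in Z^*(R_1)$ and $\ann_{R_2}(y_2)=\{0\}$ (so $\ann_{R}(v_2)=\ann_{R_1}(x_2)\times\{0\}$). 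If $v_1=[e]_{\sim}$ or $v_2=[f]_{\sim}$ I would route through $[h]_{\sim}$ and one of $[e]_{\sim},[f]_{\sim}$, e.g.\ $[e]_{\sim}-[h]_{\sim}-[f]_{\sim}-v_2$ of length $\le 3$; in the generic situation I would introduce the mixed element $w=(x_2,y_1)$, whose annihilator $\ann_{R_1}(x_2)\times\ann_{R_2}(y_1)$ has both coordinates non-zero, and check directly that $[w]_{\sim}$ is a vertex, distinct from $v_1$ and $v_2$, adjacent to both, giving $v_1-[w]_{\sim}-v_2$ of length $2$. Hence $\dm(IA(R))\le 3$. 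Finally, since $IA(R)$ is connected with more than two vertices, Corollary~\ref{grth} gives that it contains a cycle and $\gr(IA(R))=3$.

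I expect the only real obstacle to be this last diameter analysis: one must classify correctly the ``extremal'' vertices whose annihilator lives on a single coordinate, be careful about when two elements represent the same class, and produce the connector $w=(x_2,y_1)$. Everything else --- non-completeness, the vertex count, connectedness, and the girth via Corollary~\ref{grth} --- is routine once one writes $\ann_{R}((x,y))=\ann_{R_1}(x)\times\ann_{R_2}(y)$.
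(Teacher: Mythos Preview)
Your proposal is correct, but it takes a different route from the paper for the connectedness and diameter bound. The paper's proof is much shorter: after exhibiting the same three distinct vertices $[(1_{R_1},0_{R_2})]_{\sim}$, $[(0_{R_1},1_{R_2})]_{\sim}$, $[(x,0_{R_2})]_{\sim}$ (your $e$, $f$, $h$), it simply invokes Theorem~\ref{diam}, which already asserts that \emph{any} $IA(R)$ with more than two vertex classes is connected with $\dm(IA(R))\le 3$; non-completeness via $\ann_{R}(e)\cap\ann_{R}(f)=\{0\}$ and the girth via Corollary~\ref{grth} are handled exactly as you do. You instead reprove the diameter bound directly for this particular ring, using the product formula $\ann_{R}((x,y))=\ann_{R_1}(x)\times\ann_{R_2}(y)$, the type-$E$/type-$F$ dichotomy, and the connector $w=(x_2,y_1)$. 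Your argument is self-contained and does not rely on Theorem~\ref{diam}, and it even shows distance~$2$ in the generic mixed case; the price is the case analysis you correctly flagged as the only delicate step. The paper's approach, by contrast, is essentially a one-line reduction to results already proved earlier in the paper.
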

\begin{proof}
  Assume that $Z^*(R_1)\neq\phi$. Then there is $x\in Z^*(R_1)$ such that $rx=0$ for some $r\in Z^*(R_1)$. We claim that we have three different vertices in $Z(R/\sim)^*$ namely, $[(1_{R_1},0_{R_2})]_\sim$, $[(0_{R_1},1_{R_2})]_\sim$ and $[(x,0_{R_2})]_\sim$. It would follow by theorem~\ref{diam}, $IA(R)$ is connected and $\dm(IA(R))\leq3$ and by corollary~\ref{grth}, $\gr(IA(R))=3$. The vertices are distinct, since clearly $[(1_{R_1},0_{R_2})]_\sim\neq[(0_{R_1},1_{R_2})]_\sim$. On the other hand, $(r,0_{R_2})\in \ann(x,0_{R_2})$ and $(r,0_{R_2})\notin \ann(1_{R_1},0_{R_2})$. Then $[(x,0_{R_2})]_\sim\neq[(1_{R_1},0_{R_2})]_\sim$. Besides, $(0_{R_1},1_{R_2})\in \ann(x,0_{R_2})$ and $(0_{R_1},1_{R_2})\notin \ann(0_{R_1},1_{R_2})$. Thus $[(x,0_{R_2})]_\sim\neq[(0_{R_1},1_{R_2})]_\sim$. To show that $IA(R)$ is not a complete graph, it's easy to verify that $[(1_{R_1},0_{R_2})]_\sim$ and $[(0_{R_1},1_{R_2})]_\sim$ are not adjacent. Similarly, if $Z^*(R_2)\neq\phi$, then $IA(R)$ is connected and not complete graph with $\dm(IA(R))\leq3$ and $\gr(IA(R))=3$.
\end{proof}
\end{Section}
%
%%%%%%%%%%%%%%%%%%%%%%%%%%%%%%%%%%%%%%%%%%%%%%%%% Section 5: Local rings %%%%%%%%%%%%%%%%%%%%%%%%%%%%%%%%%%%%%%%%%%%
%

%%
%%
%
%\def\bibfont{\small}

%
\end{document}